\numberwithin{equation}{section}
\theoremstyle{plain}
\newtheorem{theorem}{Theorem}[section]
\newtheorem{lemma}[theorem]{Lemma}
\newtheorem{conjecture}[theorem]{Conjecture}
\theoremstyle{definition}
\theoremstyle{remark}
\newtheorem{remark}[theorem]{Remark}
\newtheorem{case[theorem]}{Case}
\def \R{{\mathbb R}}
\def \C{{\mathbb C}}
\def\norm#1.#2.{\lVert#1\rVert_{#2}}
\def\R{\mathbb R}
\def \S{{\mathcal S}}
\title{Uncertainty Principles for the Fourier and the Short-Time Fourier Transforms}
\author{Anirudha Poria}
\address{Department of Mathematics,
Indian Institute of Science,
Bengaluru 560012, Karnataka, India.}
\email{anirudhap@iisc.ac.in}
\keywords{Uncertainty principles; short-time Fourier transform; Beurling's theorem.}
\subjclass[2010]{Primary 42B10; Secondary 94A12.}
\date{\today}
\begin{document}
\maketitle
\begin{abstract} 
The aim of this paper is to establish a few uncertainty principles for the Fourier and the short-time Fourier transforms. Also, we discuss an analogue of Donoho--Stark uncertainty principle and provide some estimates for the size of the essential support of the short-time Fourier transform.   
\end{abstract}    

\section{Introduction and statement of the results} 

The uncertainty principle states that a non-zero function and its Fourier transform cannot be simultaneously sharply localized. We consider the Fourier transform on $\R^d$ to be normalized as $$\hat{f}(\xi)=\int_{\R^d} f(x) e^{- 2 \pi i x \cdot \xi} \; dx,$$ where $x \cdot \xi$ is the standard inner product on $\R^d$. There are various forms of the uncertainty principle. The most remarkable result is due to Beurling, which states that:
\begin{theorem}[Beurling]\label{Beurling}
Let $f \in L^2(\mathbb{R}^d)$ be such that
\[ \iint_{\mathbb{R}^{2d}} |f(x) \hat{f}(\xi)| e^{2  \pi |x \cdot \xi| }\;dxd\xi < \infty.\]
Then $f = 0$ almost everywhere.
\end{theorem}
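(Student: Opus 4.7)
The plan is to use the classical complex-analytic approach: reduce to dimension one, extend $f$ and $\hat f$ to entire functions of controlled growth, and conclude via a Phragm\'en--Lindel\"of / Liouville argument.

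\textbf{Reduction to $d=1$.} First I would reduce to one dimension by a slicing / Radon-transform argument: if Beurling's theorem is known in 1D, then applying it to the restriction of the Fourier transform along every ray through the origin shows that $\hat f$ vanishes on each such ray, hence on all of $\mathbb{R}^d$, giving $f=0$.

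\textbf{Analytic continuation.} In dimension one, Fubini applied to the hypothesis yields, for a.e.\ $x_0$ with $f(x_0)\neq 0$,
$$\int_\mathbb{R}|\hat f(\xi)|\,e^{2\pi|x_0\xi|}\,d\xi<\infty.$$
The integrand is monotone increasing in $|x_0|$, so the same bound holds for all $|x|\leq|x_0|$; Fourier inversion then extends $f$ holomorphically to the strip $|\mathrm{Im}\,z|<|x_0|$. By symmetry, $\hat f$ admits an analogous extension to a strip of its own. Choosing $x_0$ in the support of $f$ as large as available (and disposing of the compactly supported case directly via Paley--Wiener) promotes $f$ and $\hat f$ to entire functions on $\mathbb{C}$.

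\textbf{Phragm\'en--Lindel\"of / Liouville.} Next I would convert the two-variable $L^1$ hypothesis into quantitative pointwise bounds of the form $|f(z)|\leq Ce^{2\pi a|\mathrm{Im}\,z|}$ and a matching bound for $\hat f$. An auxiliary entire function built from $f$ and $\hat f$---for instance $z\mapsto f(z)\hat f(z)/(1+z^2)^N$---can then be shown to be bounded on $\mathbb{R}$ and of exponential type zero in each quadrant; Phragm\'en--Lindel\"of and Liouville force it to be constant, and the $L^2$ behaviour at infinity forces the constant to be $0$, whence $f\equiv 0$.

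\textbf{Main obstacle.} The hardest step is the passage from the $L^1$-type integrability hypothesis to genuine pointwise exponential bounds in the full complex plane: a naive $L^1\to L^\infty$ extraction loses too much, and one must either invoke a subharmonicity/plurisubharmonicity argument or interlace the $f$ and $\hat f$ estimates carefully so as to produce exactly the exponential type that Phragm\'en--Lindel\"of can absorb.
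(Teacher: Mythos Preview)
The paper does not supply its own proof of Beurling's theorem: it is quoted in the introduction as a classical result, with the proof attributed to H\"ormander \cite{Hor91}. The only way the paper recovers it is as the special case $p=1$ of Theorem~\ref{Th3}, but that argument invokes the Bonami--Demange--Jaming theorem (Theorem~\ref{Bonami}) as a black box, and Theorem~\ref{Bonami} with $N=0$ \emph{is} Beurling's theorem---so the paper's route is a reduction to a known strictly stronger result rather than an independent proof.

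Your plan follows H\"ormander's original complex-analytic strategy, and the broad outline (reduce to $d=1$; extend $f$ and $\hat f$ to entire functions; apply Phragm\'en--Lindel\"of/Liouville) is the right one and genuinely different from anything in the paper. Two places where more would be needed before the plan becomes a proof: the promotion from ``holomorphic on a strip'' to ``entire'' is not automatic and requires showing that the strip can be taken arbitrarily wide, which in turn needs a separate argument that $f$ cannot have bounded support unless $f=0$; and the specific auxiliary function $z\mapsto f(z)\hat f(z)/(1+z^2)^N$ does not obviously have exponential type zero in each quadrant from the information at hand---H\"ormander's actual device is more delicate, building an auxiliary entire function whose boundedness on $\R$ is read off directly from the integrability hypothesis rather than from pointwise bounds on $f$ and $\hat f$ separately. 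You have correctly isolated this last step as the main obstacle, so the plan is honest about where the real work lies; it just does not yet contain the idea that closes it.
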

Its proof was published much later in 1991 by H\"ormander \cite{Hor91}. We can obtain the well-known uncertainty principles of Hardy, Cowling--Price and Gelfand--Shilov as corollaries to Theorem \ref{Beurling} (see \cite{Thn04}). For the purpose of this paper, we state here Hardy's and Cowling--Price's theorems. In 1933, Hardy \cite{Har33} proved the following uncertainty principle:
\begin{theorem}[Hardy]\label{Hardy}
Let $f \in L^2(\R^d)$, and assume that
\[ |f(x)| \leq C e^{-a \pi x^2}  \qquad \mathrm{and} \qquad  |\hat{f}(\xi)| \leq C e^{-b \pi \xi^2} \]
for some constants $a, b, C > 0$. Then three cases can occur.
\begin{enumerate}
\item[$(i)$] If $ab=1$, then $f(x)=C e^{-a \pi x^2}.$
\item[$(ii)$] If $ab> 1$, then $f \equiv 0$.
\item[$(iii)$] If $ab<1$, then any finite linear combination of Hermite functions satisfies these decay conditions.
\end{enumerate} 
\end{theorem}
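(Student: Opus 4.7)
My plan is to treat the three cases of Theorem \ref{Hardy} separately: I would derive case $(ii)$ as a corollary of Beurling's theorem, prove case $(i)$ by a complex-analytic argument, and verify case $(iii)$ by exhibiting (scaled) Hermite-function examples. For $ab>1$, the approach is to check the hypothesis of Theorem \ref{Beurling} directly. Using $|x\cdot\xi|\le|x||\xi|$ and completing the square in $a|x|^2+b|\xi|^2-2|x||\xi|=(\sqrt a|x|-\sqrt b|\xi|)^2+2(\sqrt{ab}-1)|x||\xi|$, the two Gaussian bounds yield
\[
|f(x)\hat f(\xi)|\,e^{2\pi|x\cdot\xi|}\le C^2 e^{-\pi(\sqrt a|x|-\sqrt b|\xi|)^2-2\pi(\sqrt{ab}-1)|x||\xi|}.
\]
When $ab>1$ the right-hand side is integrable on $\R^{2d}$ (easily seen by passing to polar coordinates in the radial pair $(|x|,|\xi|)$), so Theorem \ref{Beurling} forces $f\equiv 0$.

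For the critical case $ab=1$, the Beurling integrand fails to decay along the ray $\sqrt a|x|=\sqrt b|\xi|$ and the argument above breaks down, so I would switch to complex analysis. The Gaussian decay of $f$ lets $\hat f$ be continued to an entire function on $\C^d$, and completing the square inside the Fourier integral gives the estimate $|\hat f(\xi+i\eta)|\le C'e^{\pi|\eta|^2/a}$. Form the auxiliary entire function $F(\zeta):=\hat f(\zeta)\,e^{\pi b\,\zeta\cdot\zeta}$, where $\zeta\cdot\zeta=\sum\zeta_j^2$. A direct computation shows that $|F|\le C$ on $\R^d$ (from the hypothesis on $\hat f$), that $|F|\le C'$ on $i\R^d$ (where $b=1/a$ makes the Gaussian multiplier exactly cancel the growth estimate), and that $F$ has order $\le 2$. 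For each unit vector $u\in\R^d$ the slice $z\mapsto F(zu)$ is a one-variable entire function, bounded on both coordinate axes and of order $\le 2$; a Phragm\'en--Lindel\"of argument in each quadrant combined with Liouville's theorem would then give $F(zu)\equiv F(0)$, so $F$ is globally constant. Hence $\hat f(\xi)=C_0 e^{-\pi b|\xi|^2}$, and Fourier inversion together with $b=1/a$ recovers $f(x)=Ce^{-\pi a|x|^2}$.

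The main obstacle is precisely this Phragm\'en--Lindel\"of step: each open quadrant has opening $\pi/2$ while $F$ has growth exactly of order $2$, which is the critical exponent where the classical principle does not directly apply. I would circumvent this either by the substitution $w=z^2$, which maps a rotated sector of opening $\pi/2$ onto a half-plane and halves the effective order (the single-valuedness issue can be addressed by considering $F(z)F(-z)$ or by pairing opposite quadrants), or by inserting a sub-critical regulariser of the form $e^{-\varepsilon z^{2-\delta}}$, applying the standard principle, and letting $\varepsilon\to 0^+$. Finally, case $(iii)$ is immediate: given $ab<1$, choose $\alpha\in(a,1/b)$; then for any polynomial $p$ the function $f(x)=p(x)e^{-\pi\alpha|x|^2}$ is a finite linear combination of scaled Hermite functions whose Fourier transform is of the same form (polynomial times Gaussian with reciprocal parameter $1/\alpha$), and both $f$ and $\hat f$ satisfy the required Gaussian bounds.
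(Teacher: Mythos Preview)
The paper does not actually prove Theorem~\ref{Hardy}; it is quoted as classical background, with only the remark that Hardy's theorem (together with Cowling--Price and Gelfand--Shilov) ``can be obtained as corollaries to Theorem~\ref{Beurling} (see \cite{Thn04}).'' So there is no in-paper proof to compare against, and your outline is in fact more detailed than anything the paper provides.

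That said, your approach is consistent with the paper's viewpoint and is essentially the standard one. Your treatment of case~$(ii)$ via Beurling is exactly the route the paper alludes to, and your integrability check (completing the square in $a|x|^2+b|\xi|^2-2|x||\xi|$) is correct when $ab>1$. For case~$(i)$ you rightly observe that Beurling's hypothesis fails on the critical cone $\sqrt a\,|x|=\sqrt b\,|\xi|$, and you fall back on Hardy's original Phragm\'en--Lindel\"of argument; both workarounds you propose for the borderline-order issue (the $z\mapsto z^2$ substitution, or a subcritical regulariser) are the classical fixes and would go through. Case~$(iii)$ is handled correctly by exhibiting dilated Hermite-type functions with parameter $\alpha\in(a,1/b)$. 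One cosmetic point: in case~$(i)$ the conclusion should read $f(x)=C_0\,e^{-a\pi x^2}$ for \emph{some} constant $C_0$, not necessarily the constant $C$ from the hypothesis; the paper's statement has the same imprecision, so this is not a defect of your proof.
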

In 1983, Cowling and Price \cite{Cow83} generalized this theorem by replacing point wise Gaussian bounds for $f$
by Gaussian bounds in $L^p$ sense and in $L^q$ sense for $\hat{f}$ as well. More precisely, they proved the
following theorem:
\begin{theorem}[Cowling--Price]\label{Cow-Pri}
Let $f:\R^d \to \C$ be a measurable function such that
\begin{enumerate}
\item[$(i)$] $\Vert e^{a \pi x^2} f  \Vert_p < \infty,$
\item[$(ii)$] $\Vert e^{b \pi \xi^2} \hat{f}  \Vert_q < \infty,$
\end{enumerate}
where $a, b>0$ and $1 \leq p, q \leq \infty$ such that $\min(p, q)$ is finite. If $ab \geq 1$, then $f=0$ almost everywhere. If $ab < 1$, then there exist infinitely many linearly independent functions satisfying $(i)$ and $(ii)$.
\end{theorem}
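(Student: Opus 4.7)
My plan is to derive the result from Beurling's theorem (Theorem~\ref{Beurling}) by verifying its integrability condition
\[
\iint_{\R^{2d}} |f(x)\,\hat f(\xi)|\,e^{2\pi|x\cdot\xi|}\,dx\,d\xi<\infty,
\]
after which $f=0$ a.e.\ follows automatically. The fundamental tool is the AM--GM bound $2\pi|x\cdot\xi|\le\pi\alpha|x|^2+\pi|\xi|^2/\alpha$, valid for every $\alpha>0$, which factorises the Beurling integrand and reduces the task to showing
\[
\int |f(x)|\,e^{\pi\alpha|x|^2}\,dx<\infty \quad\text{and}\quad \int|\hat f(\xi)|\,e^{\pi|\xi|^2/\alpha}\,d\xi<\infty.
\]
Against hypotheses $(i)$ and $(ii)$, H\"older's inequality gives
\[
\int |f|\,e^{\pi\alpha|x|^2}\,dx \le \bigl\|e^{\pi a|x|^2}f\bigr\|_p\,\bigl\|e^{-\pi(a-\alpha)|x|^2}\bigr\|_{p'},
\]
finite whenever $\alpha<a$ (the Gaussian factor lies in every $L^{p'}$; the case $p=\infty$ is handled directly by the pointwise bound). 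Symmetrically the second integral is finite whenever $1/\alpha<b$.

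In the generic regime $ab>1$ the interval $(1/b,\,a)$ is non-empty, so choosing any $\alpha$ inside it immediately verifies the Beurling integrability condition and forces $f\equiv 0$. In this regime the finite-index assumption $\min(p,q)<\infty$ is not really used: Hardy's pointwise version already suffices when $p=q=\infty$, and H\"older smoothly bridges the $L^p$-$L^q$ setting.

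The critical case $ab=1$ is the main obstacle, since the admissible interval $(1/b,a)$ collapses to a point and $\int|f|e^{\pi a|x|^2}\,dx$ may genuinely diverge when $p>1$. Here the hypothesis $\min(p,q)<\infty$ must be used more carefully. My plan is to first upgrade the weighted integrability to holomorphic control: H\"older still yields $\int|f|e^{\pi(a-\delta)|x|^2}\,dx<\infty$ and $\int|\hat f|e^{\pi(b-\delta)|\xi|^2}\,d\xi<\infty$ for every $\delta>0$, and therefore $\hat f$ extends to an entire function of order $2$ on $\C^d$ with the growth estimate $|\hat f(\xi+i\eta)|\le C_\delta\,e^{\pi(b+\delta)|\eta|^2}$ in the imaginary directions. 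Then I consider the auxiliary entire function $G(z)=\hat f(z)\,e^{\pi b z^2}$: along each imaginary direction its growth collapses to $C_\delta e^{\pi\delta|\eta|^2}$ (hence type zero after letting $\delta\to 0$), while along the real directions $G\big|_{\R^d}$ lies in $L^q$ by hypothesis $(ii)$. A Phragm\'en--Lindel\"of / indicator-function argument, applied slice-by-slice in the variables of $\C^d$, shows that $G$ is of minimal type in every direction; coupled with $G\in L^q(\R^d)$ and $q<\infty$, this forces $G\equiv 0$, hence $f\equiv 0$. The case $p<\infty,\,q=\infty$ is symmetric. The role of $\min(p,q)<\infty$ is exactly to exclude the Hardy extremal $Ce^{-\pi a|x|^2}$, which at $ab=1$ corresponds to $G$ being a non-zero constant, and which would violate a finite $L^p$ or $L^q$ bound.

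The sharpness statement $(ab<1)$ is classical: any finite linear combination of rescaled Hermite functions satisfies $(i)$ and $(ii)$ with room to spare (Hermite functions being Fourier eigenfunctions with Gaussian decay), giving infinitely many linearly independent examples. The technical heart of the whole argument is the $ab=1$ step, where the interplay of the holomorphic extension of $\hat f$, growth estimates in imaginary directions, and the multidimensional Phragm\'en--Lindel\"of principle is most delicate.
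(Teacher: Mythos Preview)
The paper does not actually contain a proof of Theorem~\ref{Cow-Pri}: Cowling--Price is quoted as a classical result from \cite{Cow83}, with the single remark that it ``can be obtained as a corollary to Theorem~\ref{Beurling} (see \cite{Thn04}).'' So there is no argument in the paper to compare your proposal against in any detail; the only thing the paper commits to is that the result should be \emph{derivable from Beurling}.

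Your treatment of the strict case $ab>1$ is precisely in that spirit: the AM--GM splitting $2\pi|x\cdot\xi|\le\pi\alpha|x|^2+\pi\alpha^{-1}|\xi|^2$ together with H\"older reduces directly to the Beurling integrability condition, and this is the standard route (and the one implicit in the paper's reference to \cite{Thn04}).

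For the critical case $ab=1$ you abandon Beurling and sketch a holomorphic-extension / Phragm\'en--Lindel\"of argument. This is essentially Cowling and Price's original line and is correct in outline, but it is \emph{not} the derivation-from-Beurling that the paper alludes to. If you want to stay within the paper's framework, a cleaner route is to use the Bonami--Demange--Jaming refinement (Theorem~\ref{Bonami}): H\"older against the polynomial weight $(1+|x|+|\xi|)^{-N}$ gives \eqref{Bon-eq} directly from hypotheses $(i)$--$(ii)$ even when $ab=1$, so $f(x)=P(x)e^{-c\pi x^2}$; then the constraints $a\le c$ and $b\le 1/c$ force $c=a$, and the assumption $\min(p,q)<\infty$ kills the polynomial. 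This avoids the multidimensional Phragm\'en--Lindel\"of step, which you correctly identify as the most delicate part of your plan and which you have only sketched.

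The sharpness part ($ab<1$, Hermite functions) is standard and fine.
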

The Beurling's theorem was further generalized in 2003
by Bonami, Demange and Jaming \cite{Bon03} as follows:
\begin{theorem}\label{Bonami}
Let $f \in L^2(\mathbb{R}^d)$ be such that
\begin{equation}\label{Bon-eq}
\iint_{\mathbb{R}^{2d}} \dfrac{|f(x) \hat{f}(\xi)|}{(1 + \vert x \vert + \vert \xi \vert)^N} \; e^{2  \pi |x \cdot \xi| }\;dxd\xi < \infty
\end{equation}
for some $N \geq 0$. Then $f=0$ almost everywhere whenever $N \leq d$.  If $N >d$, then $f(x)= P(x)e^{-a \pi x^2}$ where $P$ is a polynomial of degree $< \frac{N-d}{2}$ and $a>0$.
\end{theorem}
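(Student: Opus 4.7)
The approach is to adapt H\"ormander's complex-analytic proof of Theorem \ref{Beurling}, following the strategy of Bonami, Demange and Jaming, which refines the classical Phragm\'en--Lindel\"of argument so as to detect a polynomial factor with a sharp degree bound. The proof splits into a holomorphic extension step, a quantitative Phragm\'en--Lindel\"of lemma, and a final Fourier inversion.

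\textbf{Step 1 (Holomorphic extension).} A preliminary application of Fubini to (\ref{Bon-eq}) shows that for $\xi$ on a set of positive measure with $\hat f(\xi)\ne 0$ one has $\int_{\R^d}|f(x)|e^{2\pi|x\cdot\xi|}(1+|x|+|\xi|)^{-N}\,dx<\infty$, and symmetrically with the roles reversed. Hence both $f$ and $\hat f$ admit $L^1$ Gaussian envelopes, and $\hat f$ extends to an entire function $F$ on $\C^d$ of order at most $2$ and finite type $\pi a$ for some $a>0$. The weighted integrability (\ref{Bon-eq}) then translates, via Cauchy's formula on suitable polydiscs, into $(1+|z|)^{-N}$-type weighted $L^1$ bounds for $F(z)e^{-\pi a\,z\cdot z}$ on the real and imaginary slices of $\C^d$.

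\textbf{Step 2 (Polynomial Phragm\'en--Lindel\"of).} The decisive ingredient is a refinement of Hardy's rigidity: an entire function on $\C^d$ of order at most $2$ whose product with an inverse Gaussian satisfies $(1+|z|)^{-N}$-weighted $L^1$ bounds on $\R^d$ and $i\R^d$ must be of the form $P(z)e^{-\pi a\,z\cdot z}$ with $P$ a polynomial of degree less than $(N-d)/2$, and must vanish identically when $N\le d$. The dimensional correction $d$ reflects the $r^{d-1}$ factor in polar coordinates on $\R^d$: a polynomial weight of order $N$ provides just enough integrability to tolerate polynomial growth of order at most $N-d$. Applying this lemma to $F=\hat f$ yields $\hat f(\xi)=P(\xi)e^{-\pi a\,\xi\cdot\xi}$ with $\deg P<(N-d)/2$ when $N>d$, and $\hat f\equiv 0$ (hence $f\equiv 0$) when $N\le d$.

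\textbf{Step 3 (Conclusion) and main obstacle.} Since the class of (polynomial)$\times$(Gaussian) functions is preserved by the Fourier transform, Fourier inversion transfers the form $P(\xi)e^{-\pi a\xi\cdot\xi}$ to a function of the same shape for $f(x)$, yielding the stated conclusion. The technical heart of the argument --- and the main obstacle --- is the Phragm\'en--Lindel\"of lemma in Step 2: the classical maximum-principle argument, as in H\"ormander's proof of Theorem \ref{Beurling}, gives only that the entire function is some polynomial times a Gaussian, with no control on the polynomial degree. Recovering the sharp degree $<(N-d)/2$, together with the correct dimensional threshold $N\le d$, requires carefully exploiting the weight $(1+|x|+|\xi|)^{-N}$ along all complex rays of $\C^d$ simultaneously, and this multi-dimensional optimization is the principal new input of \cite{Bon03}.
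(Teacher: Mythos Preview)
The paper does not contain a proof of Theorem~\ref{Bonami}: it is stated in the introduction as a known result of Bonami, Demange and Jaming \cite{Bon03} and is then used as a black box in the proof of Theorem~\ref{Th3}. Consequently there is no ``paper's own proof'' of this statement to compare your proposal against.

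Your outline is a faithful high-level sketch of the original argument in \cite{Bon03}: holomorphic extension of $\hat f$, a sharpened Phragm\'en--Lindel\"of/Hardy-type lemma that detects the polynomial degree, and Fourier inversion. As a sketch this is accurate, and you have correctly identified that the sharp degree bound $<(N-d)/2$ and the threshold $N\le d$ are the nontrivial points requiring the refined complex-analytic lemma. That said, what you have written is a plan rather than a proof: Step~2, which you rightly flag as the main obstacle, is asserted rather than carried out. If the intent is to supply an actual proof for inclusion in the present paper, the Phragm\'en--Lindel\"of lemma would need to be stated precisely and proved (or at least cited with a pinpoint reference to \cite{Bon03}); as written, the proposal would not stand on its own as a self-contained argument.
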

Over the years, analogues of Beurling's theorem have been extended to different settings (see \cite{Thn04}). For a more detailed study of uncertainty principles, we refer to the book of Havin and J\"oricke \cite{hav94}. In time-frequency analysis, another tool of investigation is the short-time Fourier transform (STFT). We first write the definition of the STFT. 

Let $g \in \S(\R^d)$ be a fixed window function. Then the STFT of $f \in \S'(\R^d)$ with respect to $g$ is defined to be the function on $\R^d \times \hat{\R}^d$ given by
\[ V_g f (x,\xi)=\int_{\R^d} f(t) \overline{g(t-x)} e^{-2 \pi i \xi \cdot t} dt. \]
Gr\"ochenig and Zimmermann \cite{Gro01} have shown that it is possible to derive new uncertainty principles for the STFT from uncertainty principles for the pair $(f, \hat{f})$ using a fundamental identity for the STFT. They proved a version of Hardy's theorem for the STFT:
\begin{theorem}\label{GroZim}
Let $(g,f) \in \S \times \S'(\R^d)$, and assume that
$ |V_g f (x,\xi)| \leq C e^{- \pi (x^2+\xi^2)/2},  $
and that $V_g f$ does not vanish identically. Then
$V_g f (x,\xi)=C \; e^{2 \pi i ( \zeta_{0} \cdot x - \xi \cdot z_{0})} e^{- \pi (x^2+\xi^2)/2}  e^{- \pi i \xi \cdot x} $
for some $(z_0 , \zeta_0) \in \R^d \times \hat{\R}^d$, and $f$ and $g$ are multiples of $ e^{2 \pi i  \zeta_{0} \cdot t} \; e^{- \pi (t-z_0)^2}$.
\end{theorem}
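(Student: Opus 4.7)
The plan is to deduce this from Hardy's theorem (Theorem~\ref{Hardy}) via the fundamental identity of time--frequency analysis,
\[
V_g f(x,\xi) \;=\; e^{-2\pi i\, x\cdot \xi}\, V_{\hat g}\hat f(\xi,-x),
\]
which follows by applying Parseval's relation to $V_g f(x,\xi)=\langle f,\,e^{2\pi i \xi\cdot t}g(t-x)\rangle$. Taking absolute values shows that the hypothesis on $|V_g f|$ transfers to the identical Gaussian bound on $|V_{\hat g}\hat f|$, so the pairs $(f,g)$ and $(\hat f,\hat g)$ play symmetric roles.

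Next I would extract point-wise Gaussian decay of $f$ and $\hat f$ by slicing. Since $V_g f(x,\cdot)$ is the Fourier transform of $t\mapsto f(t)\overline{g(t-x)}$, Plancherel yields
\[
\int_{\mathbb{R}^d}|f(t)|^2|g(t-x)|^2\,dt \;=\;\int_{\mathbb{R}^d}|V_g f(x,\xi)|^2\,d\xi \;\le\; C_1\, e^{-\pi |x|^2},
\]
and the analogous slice of $V_{\hat g}\hat f$ produces a matching Gaussian bound on the correlation of $|\hat f|^2$ and $|\hat g|^2$. Combining these $L^2$-correlation bounds with the convolution arithmetic for Gaussian rates (a rate-$1$ correlation of two Gaussians requires each factor to have rate $2$) should force the point-wise bounds $|f(t)|\le C\,e^{-\pi t^2}$, $|\hat f(\xi)|\le C\,e^{-\pi \xi^2}$, and the same for $g$.

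Applying Hardy's theorem at the critical case $ab=1$ then forces both $f$ and $g$ to be multiples of the Gaussian $\phi_0(t)=e^{-\pi t^2}$, up to translation and modulation. The parameters $(z_0,\zeta_0)$ are pinned down by the peak of $|V_g f|$, and the stated closed form for $V_g f$ follows from the covariance of the STFT under simultaneous time--frequency shifts of $f$ and $g$, together with the direct computation
\[
V_{\phi_0}\phi_0(x,\xi)=2^{-d/2}\,e^{-\pi i x\cdot\xi}\,e^{-\pi(x^2+\xi^2)/2}.
\]

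The main obstacle I expect is passing from the $L^2$-correlation bounds to sharp point-wise Gaussian decay at the \emph{critical} rate required by Hardy: a naive slicing gives only the sub-critical rate $1/2$, while Hardy's $ab=1$ case demands rate $1$. Bridging this gap requires coupling the space-side and frequency-side bounds simultaneously through the fundamental identity, and invoking the Gaussian-rate arithmetic noted above.
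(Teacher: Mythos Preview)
The paper does not prove Theorem~\ref{GroZim}; it is quoted from Gr\"ochenig--Zimmermann \cite{Gro01} as background, so there is no proof here to compare your outline against.

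On the merits of your plan: the obstacle you flag at the end is real and is not bridged by the ``Gaussian-rate arithmetic'' you appeal to. From the slicing bound
\[
\int_{\mathbb{R}^d}|f(t)|^2|g(t-x)|^2\,dt \;\le\; C_1\, e^{-\pi |x|^2}
\]
one cannot conclude $|f(t)|\lesssim e^{-\pi t^2}$ and $|g(t)|\lesssim e^{-\pi t^2}$: the correlation $|f|^2\ast|\tilde g|^2$ can decay at rate $1$ with neither factor decaying at rate $2$ (take $f$ compactly supported and $g$ a Gaussian of rate $1$), so there is no factorisation lemma of the type you need. Coupling the space-side and frequency-side correlation bounds does not help either, since Hardy's theorem demands \emph{point-wise} decay of $f$ and $\hat f$, and you have only integrated information about $|f|^2$ against a weight. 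As written, the argument stalls exactly where you say it does.

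The route that does work --- and which the present paper's Lemma~\ref{lem2} is designed for --- is to apply Hardy's theorem in $\mathbb{R}^{2d}$ rather than in $\mathbb{R}^d$. The auxiliary function
\[
F(x,\xi)=e^{2\pi i x\cdot\xi}\,V_g f(x,\xi)\,V_g f(-x,-\xi)
\]
satisfies $\hat F(x,\xi)=F(-\xi,x)$, so the hypothesis $|V_g f(X)|\le C\,e^{-\pi X^2/2}$ gives simultaneously $|F(X)|\le C^2 e^{-\pi X^2}$ and $|\hat F(X)|\le C^2 e^{-\pi X^2}$. Hardy's theorem in $\mathbb{R}^{2d}$ at the critical case $ab=1$ then forces $F$ to be a constant multiple of $e^{-\pi X^2}$, and the stated form of $V_g f$ (and hence of $f$ and $g$) is read off from this. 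No slicing is required.
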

Considerable attention has been paid to prove an analogue of Beurling's theorem for the STFT (see \cite{Bon03, Dem05, Gro2001, Gro03}). Bonami, Demange and Jaming \cite{Bon03} proved the following version of Beurling's theorem for the STFT. They used the $L^2$-norm instead of the $L^1$-norm of the STFT.
\begin{theorem}
Let $f,g \in L^2(\R^d)$ be non identically vanishing. If
\[ \iint_{\mathbb{R}^{2d}} \dfrac{|V_g f(x,\xi)|^2}{(1 + \vert x \vert + \vert \xi \vert)^N}  \; e^{\pi (x^2+\xi^2)} \;dxd\xi < \infty, \]
then there exists $a, w \in \R^d$ such that both $f$ and $g$ are of the form $P(x) e^{2 \pi i  w \cdot x} e^{- \pi (x-a)^2}$, where $P$ is a polynomial.
\end{theorem}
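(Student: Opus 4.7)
The plan is to reduce the statement to Theorem~\ref{Bonami} applied on $\mathbb{R}^{2d}$ to the function $F(x,\xi):=V_g f(x,\xi)$. The first step is to make precise the ``fundamental identity for the STFT'' alluded to before Theorem~\ref{GroZim}: a direct Fubini calculation, integrating first in $\xi$ to produce a delta, gives the $2d$-dimensional Fourier transform of $F$ explicitly as
\[
\hat F(\eta,\zeta) \;=\; f(-\zeta)\,\overline{\hat g(\eta)}\,e^{2\pi i\,\eta\cdot\zeta},
\]
so that $|\hat F(\eta,\zeta)| = |f(-\zeta)|\,|\hat g(\eta)|$ factorizes as a tensor product. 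This algebraic identity is what ultimately links the Fourier-side information on $F$ back to $f$ and $\hat g$ separately, and explains why the conclusion of the theorem is a statement about \emph{both} $f$ and $g$.

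The second step is to verify the Bonami integrability condition
\[
\iint_{\mathbb{R}^{2d}\times\mathbb{R}^{2d}} \frac{|F(u)|\,|\hat F(v)|}{(1+|u|+|v|)^{N'}}\,e^{2\pi|u\cdot v|}\,du\,dv \;<\;\infty
\]
for a suitable $N'$ depending on $N$ and $d$. The AM--GM bound $|u\cdot v|\le\tfrac{1}{2}(|u|^2+|v|^2)$ together with a majorization of $(1+|u|+|v|)^{-N'}$ by a tensor product of polynomial weights decouples this integral into a product of three pieces: one in $(x,\xi)$, one in $\eta$, and one in $\zeta$. The $(x,\xi)$-piece is controlled directly by the hypothesis via Cauchy--Schwarz; for the $\eta$- and $\zeta$-pieces I would first deduce weighted $L^1$ bounds on $f$ and $\hat g$ by fixing one variable in $V_g f$ and exploiting that $V_g f(x,\cdot)$ is the Fourier transform of $t\mapsto f(t)\,\overline{g(t-x)}$, so a Plancherel step together with Cauchy--Schwarz in the remaining variable transfers the hypothesis into the required Gaussian $L^1$ control on $f$ and $\hat g$. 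With the Bonami hypothesis in place, Theorem~\ref{Bonami} yields $F(x,\xi) = Q(x,\xi)\,e^{-\pi(x^2+\xi^2)/2}$ for some polynomial $Q$ of bounded degree (the Gaussian scale being pinned down by the $e^{\pi(x^2+\xi^2)}$ weight in the hypothesis). Substituting into the tensor identity then forces $f(-\zeta)\,\overline{\hat g(\eta)} = R(\eta,\zeta)\,e^{-\pi(\eta^2+\zeta^2)/2}$ for a polynomial $R$, and a variable-separation argument (evaluating at fixed $\eta$ or $\zeta$ where the opposite factor does not vanish) turns each of $f$ and $\hat g$ into a polynomial times a shifted Gaussian. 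Inverse-Fourier transforming the $\hat g$ factor produces the common form $P(x)\,e^{2\pi i\,w\cdot x}\,e^{-\pi(x-a)^2}$ for both $f$ and $g$.

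The main obstacle will be the decoupling in the second step: the hypothesis controls $|F|^2$ with a single Gaussian weight $e^{\pi|u|^2}$, whereas Theorem~\ref{Bonami} demands integrability of the $L^1$ product $|F|\,|\hat F|$ against the doubly exponential weight $e^{2\pi|u\cdot v|}$. A naive AM--GM splitting produces integrands of size $|F(u)|\,e^{\pi|u|^2}\sim e^{\pi|u|^2/2}$, which is only borderline integrable, so closing the argument requires a coordinated use of both the polynomial weight $(1+|u|+|v|)^{-N'}$ and the tensor factorization of $\hat F$ to extract enough compensating decay along each axis separately. This is the technical heart of the proof, and the place where the $L^2$ hypothesis (as opposed to a pointwise bound) must be handled most carefully.
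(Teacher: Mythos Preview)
This theorem is not proved in the present paper: it is quoted from Bonami, Demange and Jaming \cite{Bon03} as background, so there is no ``paper's own proof'' to compare your attempt against. What follows is an assessment of your proposal on its own terms.

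Your overall architecture---compute the $2d$-dimensional Fourier transform of $F=V_gf$ via the fundamental identity, obtain the tensor factorization $|\hat F(\eta,\zeta)|=|f(-\zeta)|\,|\hat g(\eta)|$, and then invoke Theorem~\ref{Bonami} on $\R^{2d}$---is indeed the route taken in \cite{Bon03}, and the identity you state for $\hat F$ is correct. The variable-separation step at the end, once one knows $F$ is a Hermite function, is also routine.

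The genuine gap is exactly the one you flag as ``the main obstacle,'' and your description does not close it. After the AM--GM bound $e^{2\pi|u\cdot v|}\le e^{\pi|u|^2}e^{\pi|v|^2}$ and any splitting of the polynomial weight, the $u$-integral you must control is of the type
\[
\int_{\R^{2d}}\frac{|F(u)|}{(1+|u|)^{M}}\,e^{\pi|u|^2}\,du,
\]
and Cauchy--Schwarz against the hypothesis leaves a factor $\int (1+|u|)^{-M'} e^{\pi|u|^2}\,du$, which diverges for every $M'$. No amount of enlarging $N'$ or redistributing polynomial weights rescues this, because the obstruction is the unmatched Gaussian $e^{\pi|u|^2}$, not a polynomial deficit. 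Your suggestion to extract ``Gaussian $L^1$ control on $f$ and $\hat g$'' from the hypothesis via Plancherel in one STFT variable does not work as stated either: Plancherel in $\xi$ gives $\int|V_gf(x,\xi)|^2\,d\xi=\int|f(t)|^2|g(t-x)|^2\,dt$, but the weight $e^{\pi\xi^2}$ in the hypothesis is not a Plancherel weight, so this step does not produce any Gaussian decay for $f$ or $\hat g$ individually. In \cite{Bon03} this difficulty is handled by a substantially more delicate argument (working with $|V_gf|^2$ and the identity of Lemma~\ref{lem1}, together with an iteration that upgrades integral decay to pointwise Gaussian decay); your sketch does not contain that mechanism. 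As written, the proposal identifies the right objects and the right target theorem but does not supply the bridge between them.
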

Demange \cite{Dem05} improved the above theorem to the following sharper version of Beurling's theorem for the STFT:
\begin{theorem}\label{Demange}
Let $f,g \in L^2(\R^d)$. If there exists an $N \geq 0$ such that 
\begin{equation}\label{Dem-eq}
\iint_{\mathbb{R}^{2d}} \dfrac{|V_g f(x,\xi)|}{(1 + \vert x \vert + \vert \xi \vert)^N}  \; e^{\pi |x \cdot \xi| } \;dxd\xi < \infty,
\end{equation}
then either $f$ or $g$ is identically zero, or both can be written as
\[ f(x)= P(x) e^{-a x^2 - 2 \pi i  w \cdot x} \quad \mathrm{and}  \quad g(x)= Q(x) e^{-a x^2 - 2 \pi i  w \cdot x}, \]
with $P$ and $Q$ polynomials whose degrees satisfy $\deg(P ) + \deg(Q)<N-d, \; w \in \C^d$ and $a>0$. The converse is also true. In particular, for $N \leq d$, $f$ or $g$ are identically vanishing.
\end{theorem}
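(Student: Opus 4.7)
The plan is to reduce Theorem \ref{Demange} to the Bonami--Demange--Jaming refinement of Beurling's theorem (Theorem \ref{Bonami}) applied on $\R^{2d}$, in the same spirit in which Gr\"ochenig and Zimmermann derived Theorem \ref{GroZim} from Theorem \ref{Hardy}. The first step is to exploit the fundamental covariance
$$|V_g f(x,\xi)| = |V_{\hat g}\hat f(\xi,-x)|,$$
which makes the hypothesis \eqref{Dem-eq} invariant under the exchange $(f,g)\leftrightarrow(\hat f,\hat g)$, up to a rotation of coordinates. Setting $F(x,\xi):=V_gf(x,\xi)$, a direct computation of the $2d$-dimensional Fourier transform of $F$, after inserting an appropriate chirp factor $e^{\pm\pi i x\cdot\xi}$ and using the Parseval-type identity for the STFT, shows that $|\widehat F(u,v)|$ coincides with $|V_gf|$ up to a permutation of variables. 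Consequently, both $F$ and $\widehat F$ satisfy the same Beurling-type integrability bound on $\R^{2d}$.

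The next step is to invoke Theorem \ref{Bonami} on $\R^{2d}$ applied to $F$. This is where the main technical obstacle arises: the exponent $e^{\pi|x\cdot\xi|}$ in \eqref{Dem-eq} couples only the $\R^d$-inner product between position and frequency, whereas Theorem \ref{Bonami} on $\R^{2d}$ demands the joint exponent $e^{2\pi|(x,\xi)\cdot(u,v)|}=e^{2\pi|x\cdot u+\xi\cdot v|}$ in the double integral over $\R^{2d}\times\R^{2d}$. Bridging this gap is the heart of the proof. I would attempt to do so by introducing an auxiliary Gaussian weight and using Cauchy--Schwarz to split the joint inner product into terms controlled separately by $|x\cdot\xi|$ and $|u\cdot v|$; the former is bounded by the assumed single integral for $F$, the latter by the corresponding integral for $\widehat F$, at the cost of absorbing leftover quadratic factors into a slightly enlarged polynomial weight $(1+|x|+|\xi|+|u|+|v|)^{-N'}$ with $N'$ comparable to $N$.

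Once Theorem \ref{Bonami} applies, it yields $V_gf(x,\xi)=R(x,\xi)\,e^{-a\pi(|x|^2+|\xi|^2)}$ for some polynomial $R$ on $\R^{2d}$ of degree controlled by $N-d$. To finish, I would unfold this representation: inverting the STFT (which is injective for any nonzero $g$) and using the fact that the Bargmann transform with Gaussian window identifies such products with entire functions of controlled exponential type, one extracts that $f$ and $g$ must share a common Gaussian envelope $e^{-ax^2-2\pi iw\cdot x}$, multiplied by polynomials $P$ and $Q$. Tracking the bidegree of $R$ in the position and frequency variables then delivers the bound $\deg(P)+\deg(Q)<N-d$. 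The converse direction is a routine direct computation of $V_gf$ for a Gaussian-times-polynomial pair $(f,g)$.
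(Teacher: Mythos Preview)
The paper does not contain a proof of Theorem~\ref{Demange}: this result is quoted from Demange~\cite{Dem05} and used as a black box in the proof of Theorem~\ref{Th2}. There is therefore no ``paper's own proof'' to compare your proposal against.

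That said, your outline has a genuine gap precisely at the place you flag as the ``main technical obstacle''. Writing $X=(x,\xi)$ and $\Omega=(u,v)$, the Beurling--Bonami--Demange--Jaming theorem on $\R^{2d}$ requires control of the weight $e^{2\pi|X\cdot\Omega|}=e^{2\pi|x\cdot u+\xi\cdot v|}$, whereas the hypothesis \eqref{Dem-eq} only gives you integrability against $e^{\pi|x\cdot\xi|}$ in a \emph{single} $\R^{2d}$-integral. Your proposed remedy --- to ``use Cauchy--Schwarz to split the joint inner product into terms controlled separately by $|x\cdot\xi|$ and $|u\cdot v|$'' --- cannot work: the quantity $x\cdot u+\xi\cdot v$ is simply not dominated by any combination of $x\cdot\xi$ and $u\cdot v$ (take, for instance, $x=u$ large and $\xi=v=0$). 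No amount of polynomial weight enlargement or auxiliary Gaussian factor will absorb an exponential mismatch of this type. This is why Demange's actual argument in \cite{Dem05} does not proceed by a direct reduction to Theorem~\ref{Bonami} on $\R^{2d}$; it requires a more delicate analysis of the ambiguity function and a Beurling-type result adapted to the specific symplectic structure of the weight $e^{\pi|x\cdot\xi|}$.

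Your final step also needs more care: even granting that $V_gf(x,\xi)=R(x,\xi)e^{-a\pi(|x|^2+|\xi|^2)}$, extracting the \emph{common} Gaussian $e^{-ax^2-2\pi i w\cdot x}$ (same $a$ and same $w$ for both $f$ and $g$) and the degree bound $\deg P+\deg Q<N-d$ is not just ``unfolding'' --- it requires identifying which polynomials $R$ can arise as STFTs of polynomial-times-Gaussian pairs, which is a nontrivial part of the argument.
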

It has been conjectured that a result similar to Beurling's theorem is also true for the STFT. Gr\"ochenig \cite{Gro03} posed the following conjecture as a version of Beurling's theorem for the STFT:
\begin{conjecture}\label{Con}
Assume that $f,g \in L^2(\R^d)$. If \[ \iint_{\mathbb{R}^{2d}} |V_g f(x,\xi)| \; e^{\pi |x \cdot \xi|} \;dxd\xi < \infty, \]
then $f \equiv 0 $ or $g \equiv 0 $. 
\end{conjecture}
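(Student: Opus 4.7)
The plan is to observe that this conjecture is an immediate corollary of Theorem \ref{Demange}. The hypothesis $\iint |V_g f(x,\xi)|\,e^{\pi|x\cdot\xi|}\,dxd\xi<\infty$ is precisely the specialization of Demange's condition (\ref{Dem-eq}) to $N=0$, since then the weight $(1+|x|+|\xi|)^{-N}$ reduces to $1$. So I would apply Theorem \ref{Demange} with $N=0$ and conclude that either $f\equiv 0$, $g\equiv 0$, or both $f,g$ are of the form $P(x)e^{-ax^2-2\pi iw\cdot x}$ and $Q(x)e^{-ax^2-2\pi iw\cdot x}$ with $\deg(P)+\deg(Q)<N-d=-d$.

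To finish, I would rule out the polynomial alternative. Since the degree of any nonzero polynomial is nonnegative, the inequality $\deg(P)+\deg(Q)<-d<0$ forces $P\equiv 0$ or $Q\equiv 0$, i.e., $f\equiv 0$ or $g\equiv 0$. This is in fact what the closing sentence of Theorem \ref{Demange} records: for $N\leq d$ one has $f$ or $g$ identically vanishing, and here $N=0\leq d$. So the conjecture follows at once.

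If instead I wanted a proof that does not invoke Theorem \ref{Demange}, my plan would be to use the Gr\"ochenig--Zimmermann fundamental identity, which gives $|V_g f(x,\xi)| = |V_{\hat g}\hat f(\xi,-x)|$, together with an analysis of the tensor product $F(u,v)=f(u)\overline{g(v)}$ whose Fourier transform factors as $\hat F(\alpha,\beta)=\hat f(\alpha)\overline{\hat g(-\beta)}$. The aim would be to reduce the STFT integral to a genuine Fourier-pair integral on $\R^{2d}$ to which Beurling's theorem (Theorem \ref{Beurling}) or its refinement Theorem \ref{Bonami} can be applied, concluding that $F\equiv 0$ and hence $f\equiv 0$ or $g\equiv 0$. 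The main obstacle in this second route would be to match the phase-space weight $e^{\pi|x\cdot\xi|}$ with the Beurling-type weight $e^{2\pi|(u,v)\cdot(\alpha,\beta)|}$ on $\R^{4d}$: the bilinear pairings differ in both scale and form, so one would need a symplectic change of coordinates together with careful control of the resulting Jacobian in order to align them, and to handle general $L^2$ windows $g$ rather than merely Schwartz ones.
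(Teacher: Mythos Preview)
Your proof is correct. Both you and the paper derive the conjecture from Demange's Theorem~\ref{Demange}; the paper obtains it as the $p=1$ case of Theorem~\ref{Th2}. The difference is that the paper's proof of Theorem~\ref{Th2} is written for general $p\ge 1$: it uses H\"older's inequality to land in condition~(\ref{Dem-eq}) with some $N>0$ (possibly $N>d$), and therefore must explicitly rule out the polynomial alternative by computing $V_gf$ for Hermite-type windows and showing the weighted integral diverges. Your argument, tailored to $p=1$, is more direct: the hypothesis \emph{is} condition~(\ref{Dem-eq}) with $N=0\le d$, so the last sentence of Theorem~\ref{Demange} already forces $f\equiv 0$ or $g\equiv 0$ with no further work. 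What the paper's longer route buys is the full range $1\le p<\infty$; what your route buys is brevity for the conjecture itself.

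Your second paragraph is only a sketch and is not needed. The obstacle you identify there---aligning the STFT weight $e^{\pi|x\cdot\xi|}$ with a Beurling-type weight $e^{2\pi|(u,v)\cdot(\alpha,\beta)|}$ on $\R^{4d}$---is genuine, and it is precisely why the clean path goes through Demange's result rather than directly through Theorem~\ref{Beurling} or Theorem~\ref{Bonami}.
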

But so far upto our knowledge, this result has not been proved. However, Gr\"ochenig \cite{Gro03} proved a weaker version of this conjecture as follows:
\begin{theorem}\label{Gro1}
Assume that $f,g \in L^2(\R^d)$. If
\[ \iint_{\mathbb{R}^{2d}} |V_g f(x,\xi)| \; e^{\pi (x^2+\xi^2)/2} \;dxd\xi < \infty, \]
then $f \equiv 0 $ or $g \equiv 0 $.
\end{theorem}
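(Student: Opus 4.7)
The plan is to observe that Theorem \ref{Gro1} follows as an immediate corollary of the stronger Theorem \ref{Demange} of Demange already recorded in the excerpt. The weight $e^{\pi(|x|^2+|\xi|^2)/2}$ appearing in our hypothesis dominates the Beurling-type weight $e^{\pi|x\cdot\xi|}$ of Demange pointwise, since the elementary inequality
\[
  |x\cdot\xi| \;\leq\; |x|\,|\xi| \;\leq\; \tfrac{1}{2}\bigl(|x|^2+|\xi|^2\bigr)
\]
holds on $\R^d\times\R^d$. Taking $N=0$ in (\ref{Dem-eq}) (so that the polynomial denominator equals $1$), the hypothesis of Theorem \ref{Gro1} directly implies the integrability (\ref{Dem-eq}) for $N=0$. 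Since $0\leq d$, Theorem \ref{Demange} gives $f\equiv 0$ or $g\equiv 0$, which is the desired conclusion.

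A self-contained alternative would be to mimic Gr\"ochenig's original approach, which predates Demange's theorem. One first upgrades the weighted $L^1$ hypothesis to a pointwise Gaussian decay $|V_g f(x,\xi)|\leq C\,e^{-\pi(|x|^2+|\xi|^2)/2}$ and then invokes Theorem \ref{GroZim} (Hardy for the STFT). The $L^1$-to-pointwise upgrade is performed via the reproducing kernel identity for the STFT combined with the symmetry $|V_g f(x,\xi)|=|V_{\hat g}\hat f(\xi,-x)|$ and the explicit formula $|V_\phi \phi(x,\xi)|=e^{-\pi(|x|^2+|\xi|^2)/2}$ for the normalized Gaussian window $\phi$, which allows one to change the window from an arbitrary $g\in L^2$ to a Gaussian without losing the Gaussian decay rate. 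Once Theorem \ref{GroZim} applies, the resulting candidate $V_g f(x,\xi)=C e^{-\pi(|x|^2+|\xi|^2)/2}e^{2\pi i(\zeta_0\cdot x-\xi\cdot z_0)}e^{-\pi i\xi\cdot x}$ is tested against the strict integrability of $|V_g f|\,e^{\pi(|x|^2+|\xi|^2)/2}$; this constant must vanish, forcing $f\equiv 0$ or $g\equiv 0$.

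The main obstacle in the self-contained route is that the critical Gaussian weight $e^{\pi(|x|^2+|\xi|^2)/2}$ is \emph{not} submultiplicative under convolution, so the standard window-switching estimates $|V_\phi f|\lesssim |V_g f|\ast|V_g\phi|$ do not transport weighted integrability to a different window by a one-line application of Young's inequality; one has to argue more carefully (for instance via the analyticity of the Bargmann transform) to obtain the desired pointwise Gaussian bound. By contrast, the reduction to Theorem \ref{Demange} bypasses all of these analytic subtleties and in fact exhibits Theorem \ref{Gro1} as a rather coarse corollary of Demange's sharper result, which is why the former is only a partial step toward Conjecture \ref{Con}.
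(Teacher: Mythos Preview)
Your reduction to Demange's Theorem~\ref{Demange} with $N=0$ via the pointwise inequality $|x\cdot\xi|\le\tfrac12(|x|^2+|\xi|^2)$ is correct, and the paper itself records exactly this implication in the Remark following the proof of Theorem~\ref{Th2} (where it observes that Theorem~\ref{Th2} implies Theorem~\ref{Th1}, of which Theorem~\ref{Gro1} is the case $p=1$).

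That said, the paper's \emph{primary} proof of this statement---namely the proof of Theorem~\ref{Th1} specialized to $p=1$---takes a genuinely different route and does not invoke Demange at all. It builds the auxiliary family
\[
F_Z(X)=e^{2\pi i x\cdot\xi}\,V_g(M_\zeta T_z f)(X)\,V_g(M_\zeta T_z f)(-X),
\]
uses the identity $\widehat{F_Z}(\Omega)=F_Z(U\Omega)$ from Lemma~\ref{lem2}, rewrites the Gaussian-weighted integral of $|F_Z|$ as a self-convolution $(\Psi\ast\Psi)(-2Z)$ with $\Psi=|V_gf|\,e^{\pi|\,\cdot\,|^2/2}\in L^1(\R^{2d})$, and then applies Cowling--Price (Lemma~\ref{lem3}) on $\R^{2d}$ to force $F_Z\equiv 0$ and hence $V_gf\equiv 0$. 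Your argument is shorter because it outsources all the hard work to Demange's theorem; the paper's argument is more self-contained (it only needs the classical Cowling--Price theorem on $\R^{2d}$) and showcases the auxiliary-function technique, which is the real point of that section. Your discussion of the ``self-contained alternative'' via Theorem~\ref{GroZim} and the non-submultiplicativity obstacle is accurate commentary, but it is not the route the paper takes either.
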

Further, Gr\"ochenig obtained an estimate for the size of the essential support of $V_g f$, analogous to the uncertainty principle of Donoho and Stark \cite{Don89} for the pair $(f, \hat{f})$. Several interesting versions of uncertainty principle have been studied by various authors for the STFT. We refer the reader to \cite{Jam98, Lie90, Mal10} and the references therein. 

The aim of this paper is to prove Conjecture \ref{Con} and
a few uncertainty principles for the Fourier and the STFT. We investigate the following problems: 

If $\| e^{\pi(x^2+\xi^2)/2} \; V_g f\|_{L^p(\R^{2d})} < \infty$, or  $\| e^{\pi |x \cdot \xi|} \; V_g f\|_{L^p(\R^{2d})} < \infty$, or $\| e^{2 \pi |x \cdot \xi|} \; f\; \hat{f} \; \|_{L^p(\R^{2d})} < \infty$, then what we can say about the functions $f$ and $g$.

More precisely, we establish the following problems:
\begin{theorem}\label{Th1}
Let $1 \leq p < \infty$ and $f,g \in L^2(\R^d)$ be non identically vanishing. If
\begin{equation}\label{eq01}
\iint_{\mathbb{R}^{2d}} |V_g f(x,\xi)|^p \; e^{\pi p (x^2+\xi^2)/2} \;dxd\xi < \infty,
\end{equation}
then $f \equiv 0 $ or $g \equiv 0 $.
\end{theorem}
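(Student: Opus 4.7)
The plan is to reduce Theorem \ref{Th1} to Demange's Theorem \ref{Demange} by means of a single H\"older estimate. The key observation is that, although the Gaussian weight $e^{\pi p(x^2+\xi^2)/2}$ in our hypothesis is not directly comparable to the Demange weight $e^{\pi|x\cdot\xi|}(1+|x|+|\xi|)^{-N}$, the difference $(x^2+\xi^2)/2 - |x\cdot\xi| \geq 0$ furnishes an integrable Gaussian factor that can be absorbed into the conjugate factor of a H\"older splitting.

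Concretely, I would write
\[
\iint_{\R^{2d}} \frac{|V_g f(x,\xi)|}{(1+|x|+|\xi|)^d}\, e^{\pi|x\cdot\xi|}\, dxd\xi
= \iint_{\R^{2d}} \bigl[|V_g f|\, e^{\pi(x^2+\xi^2)/2}\bigr] \cdot \bigl[(1+|x|+|\xi|)^{-d}\, e^{-\pi[(x^2+\xi^2)/2 - |x\cdot\xi|]}\bigr] dxd\xi,
\]
and apply H\"older's inequality with exponents $p$ and $p' = p/(p-1)$ (interpreted as the $L^1$--$L^\infty$ pairing when $p=1$). By \eqref{eq01}, the first factor contributes $I_p^{1/p} < \infty$, so it suffices to show
\[
J := \iint_{\R^{2d}} (1+|x|+|\xi|)^{-dp'}\, e^{-p'\pi[(x^2+\xi^2)/2 - |x\cdot\xi|]}\, dxd\xi < \infty.
\]

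The finiteness of $J$, which I regard as the main technical step, is handled by splitting $\R^{2d}$ according to the sign of $x\cdot\xi$ and performing the linear change of variables $u = x-\xi$, $v = x+\xi$ on $\{x\cdot\xi \geq 0\}$ (and $u = x+\xi$, $v = x-\xi$ on the other region). On each piece the exponent collapses to $-p'\pi|u|^2/2$ while $(1+|x|+|\xi|) \asymp (1+|u|+|v|)$; integrating out $v$ first produces a factor $C(1+|u|)^{d-dp'}$, which is finite precisely when $dp' > d$, i.e.\ for every $p>1$, and is then absorbed by the Gaussian in $u$. For $p=1$ the second bracket is pointwise bounded by $1$, so no change of variables is needed and the H\"older step is trivial.

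Once $J<\infty$ is established, the hypothesis of Theorem \ref{Demange} is in force with parameter $N = d$. Since $N \leq d$, Demange's theorem forces $f \equiv 0$ or $g \equiv 0$, which is exactly the conclusion of Theorem \ref{Th1}. The only delicate point in this strategy is the geometric estimate $J<\infty$ uniformly in $p\in[1,\infty)$; everything else amounts to H\"older bookkeeping and citation of Theorem \ref{Demange}. (One could alternatively treat $p=1$ separately by invoking Theorem \ref{Gro1} directly, but the H\"older-into-Demange route absorbs that case as well.)
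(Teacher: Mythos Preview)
Your argument is correct and takes a genuinely different route from the paper's direct proof. The paper builds the auxiliary function $F_Z(X)=e^{2\pi i x\cdot\xi}\,V_g(M_\zeta T_z f)(X)\,V_g(M_\zeta T_z f)(-X)$, uses its Fourier self-symmetry $\widehat{F_Z}(\Omega)=F_Z(U\Omega)$ from Lemma~\ref{lem2}, and then applies a Cowling--Price consequence (Lemma~\ref{lem3}) to force $F_Z\equiv 0$ for every $Z$, whence $V_g f\equiv 0$. You instead fold the Gaussian surplus $e^{-\pi[(x^2+\xi^2)/2-|x\cdot\xi|]}$ into the $L^{p'}$ factor of a single H\"older split and land exactly at $N=d$ in Demange's Theorem~\ref{Demange}, so the conclusion is immediate with no auxiliary-function machinery and no Hermite-function elimination. (The paper does observe, in the Remark following the proof of Theorem~\ref{Th2}, that Theorem~\ref{Th2} implies Theorem~\ref{Th1} via $|x\cdot\xi|\le(x^2+\xi^2)/2$; your argument is a sharpening of that indirect route, since the Gaussian gap lets you hit the critical exponent $N=d$ rather than the larger $N$ that the Theorem~\ref{Th2} proof produces and then has to clean up.) The trade-off: the paper's direct proof is self-contained modulo the classical Cowling--Price theorem and illustrates the $F_Z$ technique central to STFT uncertainty principles, whereas your proof is shorter and more transparent but invokes the deeper Theorem~\ref{Demange} as a black box.
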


\begin{theorem}\label{Th2}
Let $1 \leq p < \infty$ and $f,g \in L^2(\R^d)$ be non identically vanishing. If 
\begin{equation}\label{eq02}
\iint_{\mathbb{R}^{2d}} |V_g f(x,\xi)|^p \; e^{\pi p |x \cdot \xi|} \;dxd\xi < \infty,
\end{equation}
then $f \equiv 0 $ or $g \equiv 0 $. 
\end{theorem}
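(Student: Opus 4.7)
The plan is to reduce hypothesis \eqref{eq02} to the condition \eqref{Dem-eq} of Theorem \ref{Demange} via Hölder's inequality, and then rule out the residual polynomial-times-Gaussian case of Demange's theorem by a direct calculation of $V_g f$. When $p=1$, \eqref{eq02} is identical to \eqref{Dem-eq} with $N=0\le d$, so Theorem \ref{Demange} immediately gives $f\equiv 0$ or $g\equiv 0$.

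For $p>1$, let $q=p/(p-1)$ and fix a positive integer $N>2d(p-1)/p$. Hölder's inequality with exponents $p,q$ yields
\[
\iint_{\R^{2d}}\frac{|V_g f(x,\xi)|\,e^{\pi|x\cdot\xi|}}{(1+|x|+|\xi|)^N}\,dx\,d\xi \le \Bigl(\iint|V_g f|^p e^{p\pi|x\cdot\xi|}\Bigr)^{\!1/p}\Bigl(\iint(1+|x|+|\xi|)^{-Nq}\Bigr)^{\!1/q}<\infty,
\]
since $Nq>2d$. Theorem \ref{Demange} then applies: either $f$ or $g$ is identically zero (and we are done), or
\[
f(x)=P(x)\,e^{-ax^2-2\pi i w\cdot x},\qquad g(x)=Q(x)\,e^{-ax^2-2\pi i w\cdot x},
\]
for some $a>0$, $w\in\C^d$, and nonzero polynomials $P,Q$.

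To exclude this second alternative, I compute $V_g f$ explicitly. The substitution $t=s+x/2$ in the defining integral of the STFT, together with the standard Fourier transform of a polynomial times a Gaussian, gives
\[
V_g f(x,\xi)=R(x,\xi)\,e^{-ax^2/2-\pi^2\xi^2/(2a)}\,e^{\phi(x,\xi)},
\]
where $\phi$ is purely imaginary when $w$ is real (and contains an additional real linear term $2\pi\mathrm{Im}(w)\cdot x$ when $w$ is complex), and $R$ is a polynomial whose top-$\xi$-degree part is the Fourier transform of the top-$s$-degree term of $P(s+x/2)\overline{Q(s-x/2)}$, hence nonzero. The pointwise inequality $\tfrac{a}{2}x^2+\tfrac{\pi^2}{2a}\xi^2\ge\pi|x|\,|\xi|\ge\pi|x\cdot\xi|$ (AM-GM and Cauchy--Schwarz), with equality precisely on the $d$-dimensional subspaces $L_\pm=\{\xi=\pm(a/\pi)x\}$, controls the weight $e^{p\pi|x\cdot\xi|}$. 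In orthonormal coordinates $(y,z)$ adapted to $L_+$, the integrand of \eqref{eq02} restricted to $\{x\cdot\xi>0\}$ becomes $|R(y,z)|^p e^{-c|z|^2+\alpha\cdot y+\beta\cdot z}$ with $c>0$. The $z$-integral contributes a finite factor; the remaining integral $\int_{\R^d}|R(y,0)|^p e^{\alpha\cdot y}\,dy$ diverges because $R(y,0)$ is a nonzero polynomial in $y$ and any real linear exponent only enhances its growth in some half-direction. This contradicts \eqref{eq02}, so $P$ or $Q$ must vanish.

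The main obstacle is verifying that $R(y,0)\not\equiv 0$. If $R$ happened to vanish identically on $L_+$, then the factor $\pi\xi-ax$ (which equals a multiple of $z$ in the new coordinates) could be pulled out, and the argument repeated for $R/(\pi\xi-ax)$; in finitely many steps one arrives at a polynomial whose restriction to $L_+$ is nonzero, at which point the divergence of the $y$-integral follows and the contradiction is reached.
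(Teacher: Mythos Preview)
Your argument is essentially the paper's: apply H\"older's inequality to land in Demange's hypothesis \eqref{Dem-eq}, then compute $|V_g f|$ explicitly for $f,g$ of the form \eqref{eq7} and show that the integral \eqref{eq02} diverges because the Gaussian weight degenerates along the $d$-dimensional set $\{\sqrt{a}\,x=\pm\sqrt{a^{-1}}\,\pi\xi\}$; the paper writes this divergence as $\int_0^\infty\!\int_0^\infty |R(u,v)|^p e^{-|u-v|^2}\,du\,dv=\infty$, which is the same idea in scalar form. Two minor corrections: the real linear term $2\pi\,\mathrm{Im}(w)\cdot x$ you anticipate actually cancels (only the pure phase $e^{-2\pi i x\cdot w_1}$ survives, as in the paper's formula), and your iteration for the case $R(y,0)\equiv 0$ is unnecessary---swapping the order of integration gives $\int e^{-c|z|^2}\bigl(\int|R(y,z)|^p\,dy\bigr)\,dz=\infty$ directly for any nonzero polynomial $R$, since the inner integral is infinite for almost every $z$.
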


\begin{theorem}\label{Th3}
Let $1 \leq p < \infty$ and $f \in L^2(\R^d)$ be such that
\begin{equation}\label{eq03}
\iint_{\mathbb{R}^{2d}} |f(x) \hat{f}(\xi)|^p \; e^{2  \pi p |x \cdot \xi| }\;dxd\xi < \infty.
\end{equation}
Then $f \equiv 0$.
\end{theorem}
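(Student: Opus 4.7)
The plan is to reduce Theorem~\ref{Th3} to the generalized Beurling theorem of Bonami--Demange--Jaming (Theorem~\ref{Bonami}) by inserting a polynomial weight via Hölder's inequality, and then to eliminate the polynomial--Gaussian exceptional case that Theorem~\ref{Bonami} leaves open. For $p=1$ the hypothesis \eqref{eq03} coincides exactly with the hypothesis of Beurling's theorem (Theorem~\ref{Beurling}), so $f\equiv 0$ follows at once; the remainder of the argument treats $1<p<\infty$.

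For $1<p<\infty$, setting $p'=p/(p-1)$ and choosing any $N$ with $Np'>2d$ (so that $(1+|x|+|\xi|)^{-N}\in L^{p'}(\R^{2d})$), I would apply Hölder's inequality to obtain
\[
\iint_{\R^{2d}} \frac{|f(x)\hat f(\xi)|}{(1+|x|+|\xi|)^N}\, e^{2\pi|x\cdot\xi|}\,dx\,d\xi
\le \bigl\| e^{2\pi|x\cdot\xi|} f\hat f\bigr\|_{p}\, \bigl\|(1+|x|+|\xi|)^{-N}\bigr\|_{p'}<\infty.
\]
Theorem~\ref{Bonami} then yields either $f\equiv 0$ (the desired conclusion) or $f(x)=P(x)\,e^{-a\pi x^2}$ for a nonzero polynomial $P$ and some $a>0$; the rest of the proof rules out the second alternative.

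To do so, I would use that $\hat f(\xi)=Q(\xi)\,e^{-\pi\xi^2/a}$ for a nonzero polynomial $Q$, so the integrand in \eqref{eq03} becomes $|P(x)Q(\xi)|^p\,\exp\!\bigl(-p\pi(ax^2+\xi^2/a-2|x\cdot\xi|)\bigr)$. The quadratic exponent is non-positive by AM--GM combined with Cauchy--Schwarz, and vanishes precisely along the two rays $\xi=\pm ax$, where the equality is tight independently of $|x|$. Making the unit-Jacobian change $\xi=ax+\eta$, a short calculation shows that for $|x|>|\eta|/a$ one has $ax^2+\xi^2/a-2|x\cdot\xi|=\eta^2/a$, so the integrand reduces to $|P(x)Q(ax+\eta)|^p\,e^{-p\pi\eta^2/a}$. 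Finiteness of the original integral, via Fubini, then forces the polynomial $x\mapsto P(x)Q(ax+\eta)$ to be $L^p$-integrable outside a ball for a.e.\ $\eta$, and hence to vanish identically in $x$ for such $\eta$. Since the coefficients in $x$ are themselves polynomials in $\eta$, $P(x)Q(ax+\eta)$ is the zero polynomial on $\R^{2d}$, contradicting $P,Q\not\equiv 0$.

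The hard part will be this last step. Theorem~\ref{Bonami} forces $f\equiv 0$ only for $N\le d$, which via Hölder requires $p<2$; for $p\ge 2$ the weight $(1+|x|+|\xi|)^{-N}$ needed to absorb the Lebesgue factor inevitably admits polynomials times Gaussians, and the original $L^p$-exponential condition in \eqref{eq03} has to do the work of killing them on its own. What makes this succeed is the sharpness of the Beurling exponent $2\pi|x\cdot\xi|$: along the critical ray $\xi=ax$ the AM--GM defect is constant in $|x|$, so no Gaussian decay is left to suppress the polynomial factor on an unbounded cone.
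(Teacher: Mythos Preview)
Your proof is correct and follows essentially the same strategy as the paper: apply H\"older's inequality with a polynomial weight to invoke Theorem~\ref{Bonami}, then show that no nonzero polynomial--Gaussian $f=P(x)e^{-a\pi x^2}$ can satisfy \eqref{eq03}. The only difference is in the execution of the elimination step: the paper reduces to the one-dimensional divergence $\int_A^\infty\int_A^\infty e^{-|u-v|^2}\,du\,dv=\infty$, whereas your substitution $\xi=ax+\eta$ works directly in $\R^d$ and is arguably cleaner, since the paper's passage from $|x\cdot\xi|$ to $|x||\xi|$ in the exponent is only an equality when $d=1$.
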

Finally, we extend the results of Gr\"ochenig \cite{Gro03} and prove the following versions of the uncertainty principle about an estimate on the size of the essential support of the STFT:
\begin{theorem}\label{Th4}
Let $f$ and $g \in L^2(\R^d)$. If $U \subseteq \R^{2d}$ and $\epsilon \geq 0$ are such that
\[\iint_U |V_g f(x,\xi)| \;dx d\xi \geq (1-\epsilon) \|f \|_2 \|g \|_2, \]
then 
\[ |U| \geq  (1-\epsilon)^{\frac{p}{p-1}} \left( \frac{p}{2}  \right)^{\frac{d}{p-1}} \qquad \text{for all  } \; p \geq 2.  \]
\end{theorem}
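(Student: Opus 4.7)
The plan is to combine H\"older's inequality on the set $U$ with a sharp $L^p$ bound on the short-time Fourier transform due to Lieb. Recall Lieb's inequality: for every $f,g \in L^2(\R^d)$ and every $p \geq 2$,
\[ \iint_{\R^{2d}} |V_g f(x,\xi)|^p \, dx\, d\xi \leq \left(\frac{2}{p}\right)^d \|f\|_2^p \|g\|_2^p, \]
equivalently $\|V_g f\|_{L^p(\R^{2d})} \leq (2/p)^{d/p} \|f\|_2 \|g\|_2$. This is the natural replacement, in the $L^p$ range $p \geq 2$, for the trivial bound $\|V_g f\|_\infty \leq \|f\|_2 \|g\|_2$ used in the classical Donoho--Stark style arguments, and it is exactly what lets us interpolate between the $L^\infty$ control and the Plancherel identity $\|V_g f\|_2 = \|f\|_2 \|g\|_2$.

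Given this, I would write the hypothesis as $\iint_{\R^{2d}} \mathbf{1}_U(x,\xi)\,|V_g f(x,\xi)|\,dx\,d\xi \geq (1-\epsilon)\|f\|_2\|g\|_2$ and apply H\"older's inequality with conjugate exponents $p$ and $p/(p-1)$, obtaining
\[ (1-\epsilon)\|f\|_2\|g\|_2 \;\leq\; \bigl(\|\mathbf{1}_U\|_{p/(p-1)}\bigr)\,\|V_g f\|_p \;=\; |U|^{(p-1)/p}\,\|V_g f\|_p. \]
Plugging in Lieb's bound gives
\[ (1-\epsilon)\|f\|_2\|g\|_2 \;\leq\; |U|^{(p-1)/p}\left(\frac{2}{p}\right)^{d/p}\|f\|_2\|g\|_2, \]
and since the hypothesis forces $\|f\|_2\|g\|_2 > 0$ (otherwise the left side vanishes and there is nothing to prove, or $U$ may be taken arbitrary), we can divide through by $\|f\|_2 \|g\|_2$. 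Rearranging then yields
\[ |U| \;\geq\; (1-\epsilon)^{p/(p-1)} \left(\frac{p}{2}\right)^{d/(p-1)}, \]
which is precisely the desired estimate, and the inequality holds for each $p \geq 2$ simultaneously because Lieb's inequality holds on that whole range.

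There is no real obstacle here; the substance of the result is contained in Lieb's inequality, and the rest is just the H\"older duality used in the standard Donoho--Stark argument. The only thing worth flagging is the restriction $p \geq 2$: the Lieb bound fails for $1 \leq p < 2$, so that range would require a different tool and does not appear in the statement. One may also remark that the case $p = 2$ reproduces Gr\"ochenig's original estimate $|U| \geq (1-\epsilon)^2$ (with the $(p/2)^{d/(p-1)}$ factor collapsing to $1$), so the theorem indeed generalizes his result, and optimizing the right-hand side over $p \geq 2$ for a given $\epsilon$ and $d$ gives a strictly stronger lower bound whenever $d \geq 1$.
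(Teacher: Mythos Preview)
Your proof is correct and follows essentially the same route as the paper: apply H\"older's inequality on $U$ with exponents $p$ and $p/(p-1)$, then invoke Lieb's inequality $\|V_g f\|_p \leq (2/p)^{d/p}\|f\|_2\|g\|_2$ for $p\geq 2$, and rearrange. Your added remarks about the degenerate case $\|f\|_2\|g\|_2=0$ and the recovery of Gr\"ochenig's bound at $p=2$ are fine commentary but not needed for the argument itself.
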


\begin{theorem}\label{Th5}
Suppose that $f, \; g \in L^2(\R^d)$, $U \subseteq \R^{2d}$ and $\epsilon \geq 0$ are such that 
\begin{equation}\label{eq003}
\iint_U |V_g f(x,\xi)|^p dx d\xi \geq (1-\epsilon) \|V_g f \|^p_1.
\end{equation}
Then 
\[ |U| \geq 2^{\frac{2pd}{2-p}}  (1-\epsilon)^{\frac{2}{2-p}} \qquad \text{for all  } \; 1 \leq p < 2. \]
\end{theorem}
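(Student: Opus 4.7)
The plan is to combine a direct H\"older estimate on $U$ with two classical tools from time-frequency analysis: the Moyal/orthogonality identity $\|V_g f\|_2 = \|f\|_2 \|g\|_2$ and Lieb's sharp $L^p$-norm inequality for the STFT, which for $1\le p \le 2$ reads
\[
\|V_g f\|_p^p \;\geq\; \left(\frac{2}{p}\right)^{d} (\|f\|_2 \|g\|_2)^p.
\]
Together these should produce exactly the factor $2^{2pd/(2-p)}$ that appears in the conclusion, in the same spirit as the proof of Theorem~\ref{Th4}, where the companion (upper) form of Lieb's inequality is used for $p \ge 2$.

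First, I would apply H\"older's inequality on the set $U$ with conjugate exponents $2/p$ and $2/(2-p)$, which are admissible since $1\le p<2$:
\[
\iint_U |V_g f(x,\xi)|^p\,dx\,d\xi \;\leq\; \Bigl(\iint_U |V_g f|^2\,dx\,d\xi\Bigr)^{p/2} |U|^{(2-p)/2} \;\leq\; (\|f\|_2 \|g\|_2)^p\, |U|^{(2-p)/2},
\]
where the last step uses Moyal's identity (dropping the restriction to $U$ in the $L^2$-norm). Combining this with the hypothesis \eqref{eq003} yields
\[
(1-\epsilon)\,\|V_g f\|_1^p \;\leq\; (\|f\|_2 \|g\|_2)^p\,|U|^{(2-p)/2}.
\]

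Next, I would invoke Lieb's inequality at the endpoint $p=1$, namely $\|V_g f\|_1 \geq 2^{d}\,\|f\|_2 \|g\|_2$. Substituting this lower bound on the left-hand side, the common factor $(\|f\|_2 \|g\|_2)^p$ cancels and one is left with
\[
(1-\epsilon)\,2^{pd} \;\leq\; |U|^{(2-p)/2},
\]
so raising both sides to the power $2/(2-p)$ produces the stated bound.

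I do not anticipate any genuine obstacle once Lieb's inequality is on the table; the proof is essentially a one-line H\"older estimate combined with a sharp $L^p$-norm identity/inequality. The only point requiring care is the choice of the H\"older split: one wants the companion integrand to be an $L^2$-quantity, so that Moyal's identity precisely absorbs the factor $\|f\|_2 \|g\|_2$ on the upper side, and the $\|V_g f\|_1$ in \eqref{eq003} then dictates using Lieb's inequality at the endpoint $p=1$, which is exactly where the sharp constant $2^d$ arises and ultimately produces $2^{2pd/(2-p)}$.
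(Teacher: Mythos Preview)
Your proof is correct and follows essentially the same approach as the paper: both arguments combine the H\"older split with exponents $2/p$ and $2/(2-p)$ on $U$, Moyal's identity $\|V_g f\|_2=\|f\|_2\|g\|_2$, and Lieb's inequality at $p=1$ to produce the constant $2^{pd}$, differing only in the order of presentation.
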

The paper is organized as follows. In Section \ref{sec1}, we recall some of the properties of the STFT. Then, in Section \ref{sec2}, we prove the main results and discuss some consequences.

\section{The short-time Fourier transform}\label{sec1}

Translation and modulation are defined by $T_x f(t)=f(t-x)$ and  $M_\xi f(t)= e^{2 \pi i t \cdot \xi} f(t)$, where $t, x, \xi \in \R^d$. Using this notation, the STFT can be written as 
\[V_g f(x, \xi)=\langle f, M_\xi T_x g \rangle = \widehat{(f \cdot T_x \bar{g})} (\xi). \]

For a detailed discussion of STFT see \cite{Gro2001}. To prove uncertainty principles for the STFT we need to construct an expression derived from $V_g f$ that is invariant under the $2d$-dimensional Fourier transform. This kind of function was obtained by Jaming in \cite{Jam98}, which played a central role in obtaining certain uncertainty theorems for the STFT. We recall the following identities for the STFT from \cite{Gro03}, which we need for the proof of Theorem \ref{Th1}.

\begin{lemma}\label{lem1}
Assume that $f_1, f_2, g_1, g_2 \in L^2(\R^d)$. Then
\begin{equation}\label{eq001}
\widehat{\left(V_{g_1}f_1 \overline{V_{g_2}f_2}\right)}(x,\xi)=\left(V_{f_2}f_1 \overline{V_{g_2}g_1}\right) (-\xi, x) .
\end{equation}
\end{lemma}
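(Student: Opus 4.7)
The plan is to prove the identity by direct computation, unfolding the definition of the STFT on both sides and matching them.

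First I would write
\[
V_{g_1}f_1(x,\xi)\,\overline{V_{g_2}f_2(x,\xi)} \;=\; \iint f_1(t)\,\overline{g_1(t-x)}\,\overline{f_2(s)}\,g_2(s-x)\, e^{-2\pi i\,\xi\cdot(t-s)}\,dt\,ds,
\]
and then apply the $2d$-dimensional Fourier transform in $(x,\xi)$, with the transform variables playing the role of $(\eta,\zeta)$ say. Interchanging the order of integration (justifiably formal at first, then rigorously for $f_i,g_i\in L^2$ by a density argument from $\mathcal S$), the $\zeta$-integral produces a $\delta$-function that enforces the relation $t-s+\zeta=0$, collapsing the $s$-integration.

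Next I would make the change of variables $y=t-x$ in the remaining $x$-integration. The purpose is to decouple the integrand into a product of two integrals: one in $y$ depending only on $g_1,g_2$, and one in $t$ depending only on $f_1,f_2$. A short computation shows that these two factors are, respectively,
\[
e^{2\pi i\,\eta\cdot\zeta}\,\overline{V_{g_2}g_1(-\zeta,\eta)}\qquad\text{and}\qquad e^{-2\pi i\,\eta\cdot\zeta}\,V_{f_2}f_1(-\zeta,\eta),
\]
so the two unimodular phases cancel and the product is exactly $V_{f_2}f_1(-\zeta,\eta)\,\overline{V_{g_2}g_1(-\zeta,\eta)}$. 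Renaming $(\eta,\zeta)$ as $(x,\xi)$ gives the claimed identity \eqref{eq001}.

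The step I expect to need the most care is the bookkeeping: getting the signs right in the substitutions $t=s-\zeta$ and $y=t-x$, tracking the conjugation that flips a modulation into its inverse, and verifying that the two phase factors $e^{\pm 2\pi i\eta\cdot\zeta}$ produced by the two translations in the STFT arguments cancel out cleanly. Nothing deeper than Fubini and the Fourier inversion formula is required; the only mild subtlety is justifying the use of $\delta$-function formalism, which can be avoided by first proving the identity for $f_i,g_i$ in the Schwartz class and then extending to $L^2$ by a standard density-and-continuity argument using that $(f,g)\mapsto V_g f$ is bounded from $L^2\times L^2$ to $L^2(\R^{2d})$.
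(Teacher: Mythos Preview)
The paper does not actually supply a proof of this lemma: it is simply quoted from \cite{Gro03} (``We recall the following identities for the STFT from \cite{Gro03}''). So there is no in-paper argument to compare against.

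Your proposed direct computation is the standard proof and it is correct. One small remark on the bookkeeping: with the natural substitution $s=t+\zeta$ followed by $y=t-x$, the double integral factors \emph{exactly} as
\[
\int f_1(t)\,\overline{f_2(t+\zeta)}\,e^{-2\pi i\,\eta\cdot t}\,dt
\;\cdot\;
\int \overline{g_1(y)}\,g_2(y+\zeta)\,e^{2\pi i\,\eta\cdot y}\,dy
\;=\;
V_{f_2}f_1(-\zeta,\eta)\,\overline{V_{g_2}g_1(-\zeta,\eta)},
\]
with no extra phase factors $e^{\pm 2\pi i\,\eta\cdot\zeta}$ appearing at all; the cancellation you anticipate is unnecessary with this choice of variables. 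Your density argument (prove it first for Schwartz functions, then extend by continuity of $(f,g)\mapsto V_gf$ on $L^2\times L^2\to L^2(\R^{2d})$) is the right way to make the formal $\delta$-computation rigorous.
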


Putting $f_1= f_2$, $g_1= g_2$, and $x=\xi=0$ in (\ref{eq001}), we obtain the isometry property of the STFT:
\begin{equation}\label{eq002}
\| V_g f \|^2_{L^2(\R^{2d})}=\|f \|^2_2 \; \|g\|^2_2.
\end{equation}

\begin{lemma}\label{lem2}
\begin{enumerate}
\item[$(i)$] For $f,g \in L^2(\R^d)$, the function
\[ F(x,\xi)=e^{ 2 \pi i x \cdot \xi} \;V_g f(x,\xi) \; V_g f(-x, -\xi)  \]
satisfies
\[\hat{F}(x,\xi)=F(-\xi, x).\]
\item[$(ii)$] Consider the family of functions defined as 
\[ F_{(z, \zeta)}(x,\xi)=e^{ 2 \pi i x \cdot \xi} \;V_g (M_\zeta T_z f)(x,\xi) \; V_g (M_\zeta T_z f)(-x,-\xi).   \] 
Then
\[ \widehat{F_{(z, \zeta)}}(x,\xi)=F_{(z, \zeta)}(-\xi, x) \qquad for \;all \; (z, \zeta) \in \R^{2d}. \]
\end{enumerate}
\end{lemma}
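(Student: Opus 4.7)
The strategy is to recast $F$ into the form $V_{g_1}f_1 \overline{V_{g_2}f_2}$ already treated by Lemma \ref{lem1}, after which the desired identity $\hat F(x,\xi)=F(-\xi,x)$ drops out upon choosing the indices appropriately. So the whole proof of (i) reduces to two tasks: (a) establishing a reflection identity relating $V_g f(-x,-\xi)$ to $\overline{V_f g(x,\xi)}$, and (b) careful bookkeeping when invoking Lemma \ref{lem1}. Part (ii) will then be a one-line corollary of (i).

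For step (a), I would start from the definition
\[ \overline{V_f g(x,\xi)} = \int_{\R^d} \overline{g(t)}\, f(t-x)\, e^{2\pi i \xi \cdot t}\, dt, \]
make the substitution $s = t-x$, and collect exponentials to obtain the reflection formula
\[ V_g f(-x,-\xi) = e^{-2\pi i x \cdot \xi}\,\overline{V_f g(x,\xi)}. \]
Inserting this into the definition of $F$, the phase $e^{2\pi i x \cdot \xi}$ cancels exactly, leaving
\[ F(x,\xi) = V_g f(x,\xi)\,\overline{V_f g(x,\xi)}, \]
which is precisely the shape to which Lemma \ref{lem1} applies.

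For step (b), I would apply Lemma \ref{lem1} with $(f_1,g_1,f_2,g_2)=(f,g,g,f)$. Then $V_{g_1}f_1=V_g f$ and $V_{g_2}f_2=V_f g$, so the left-hand side of \eqref{eq001} reads $\widehat F(x,\xi)$. On the right-hand side, $V_{f_2}f_1=V_g f$ and $V_{g_2}g_1=V_f g$, giving $(V_g f\,\overline{V_f g})(-\xi,x)=F(-\xi,x)$. This proves (i).

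Part (ii) follows by applying (i) verbatim with $h:=M_\zeta T_z f \in L^2(\R^d)$ in place of $f$; the window $g$ is left untouched and the argument uses $h \in L^2(\R^d)$ only, which is automatic. The only delicate point anywhere in the argument is keeping track of the phase factors in the change of variables and correctly matching the slots $(f_1,g_1,f_2,g_2)$ in Lemma \ref{lem1}; there is no genuine analytic obstacle.
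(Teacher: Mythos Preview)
Your argument is correct. The reflection identity $V_g f(-x,-\xi)=e^{-2\pi i x\cdot\xi}\,\overline{V_f g(x,\xi)}$ is exactly what is needed to rewrite $F$ as $V_g f\,\overline{V_f g}$, and your choice $(f_1,g_1,f_2,g_2)=(f,g,g,f)$ in Lemma~\ref{lem1} then yields $\widehat F(x,\xi)=F(-\xi,x)$ immediately; part~(ii) is indeed a direct specialization with $f$ replaced by $M_\zeta T_z f$.

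As for comparison: the paper does not actually supply a proof of Lemma~\ref{lem2}. It states the lemma and then refers to \cite{Jam98, Gro01, Gro03}, remarking that these identities ``have been derived and used to prove certain uncertainty principles for the STFT.'' Your derivation via the reflection formula and Lemma~\ref{lem1} is the standard route found in those references (in particular in Gr\"ochenig's survey \cite{Gro03}), so you have filled in precisely the argument the paper chose to omit.
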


The above lemmas contain a fundamental identity for the STFT. They have been derived and used to prove certain uncertainty principles for the STFT (see \cite{Jam98, Gro01, Gro03}). The advantage of the identity is that the auxiliary function $F_{(z, \zeta)}$ inherits many properties from $V_g f$. For instance, if $V_g f$ possesses a certain decay, then  $F_{(z, \zeta)}$ has a similar decay.

\section{Proofs of main results}\label{sec2}

A simple consequence of the Cowling--Price's theorem is obtained in the following lemma. 

\begin{lemma}\label{lem3}
Let $1 \leq p < \infty$ and $f \in L^2(\R^d)$ be such that
\begin{equation}\label{eq1}
\iint_{\mathbb{R}^{2d}} |f(x) \hat{f}(\xi)|^p \; e^{\pi p (x^2+\xi^2)} \;dxd\xi < \infty.
\end{equation}
Then $f \equiv 0$.
\end{lemma}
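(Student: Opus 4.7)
The key observation is that the integrand factors completely:
\[
|f(x)\hat{f}(\xi)|^p \, e^{\pi p(x^2+\xi^2)} \;=\; \bigl(|f(x)|^p e^{\pi p x^2}\bigr)\,\bigl(|\hat{f}(\xi)|^p e^{\pi p \xi^2}\bigr).
\]
So by Fubini--Tonelli, the hypothesis is exactly
\[
\Bigl(\int_{\R^d} |f(x)|^p e^{\pi p x^2} \, dx\Bigr)\Bigl(\int_{\R^d} |\hat{f}(\xi)|^p e^{\pi p \xi^2} \, d\xi\Bigr) < \infty.
\]
Thus the plan is to reduce to Cowling--Price (Theorem \ref{Cow-Pri}) in a single step.

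First I would dispose of the degenerate case. If either factor vanishes, then $f\equiv 0$ or $\hat f\equiv 0$ almost everywhere; in the latter case, Fourier inversion gives $f\equiv 0$ as well, so there is nothing to prove. Otherwise both factors are strictly positive, and since their product is finite, each must individually be finite. In other words,
\[
\|e^{\pi x^2} f\|_p < \infty \qquad \text{and} \qquad \|e^{\pi \xi^2}\hat f\|_p < \infty.
\]

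Now I would invoke Cowling--Price with parameters $a=b=1$ and $q=p$. Since $1\le p <\infty$, the minimum $\min(p,q)=p$ is finite, so the theorem is applicable, and $ab=1\ge 1$ forces $f=0$ almost everywhere. This completes the argument.

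There is no real obstacle here --- the lemma is essentially a rewriting of Cowling--Price in product form --- so the only point to be careful about is making the separation-of-variables step rigorous (which requires verifying nonnegativity of the integrand, handled by Tonelli). This lemma will presumably serve as the base ingredient in the forthcoming proof of Theorem \ref{Th3}, where a similar factorization is unavailable and one must instead exploit the $e^{2\pi p |x\cdot\xi|}$ weight together with Beurling's theorem (Theorem \ref{Beurling}) or its generalization (Theorem \ref{Bonami}).
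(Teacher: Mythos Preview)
Your proof is correct and follows exactly the same route as the paper: factor the integrand, use Tonelli to split the double integral into a product of $\|e^{\pi x^2}f\|_p^p$ and $\|e^{\pi\xi^2}\hat f\|_p^p$, and then invoke Cowling--Price (Theorem~\ref{Cow-Pri}) with $a=b=1$. Your handling of the degenerate case where one factor vanishes is a nice extra bit of care, though note that the lemma is actually used in the paper for Theorem~\ref{Th1}, not Theorem~\ref{Th3}.
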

\begin{proof}
Let $e_\pi (x)=e^{\pi x^2}$ for $x \in \R^d$. If $f \in L^2(\R^d)$ satisfies (\ref{eq1}), then we have
\[ \Vert e_\pi f \Vert_p^p \; \Vert e_\pi \hat{f} \Vert_p^p= \iint_{\mathbb{R}^{2d}} |f(x) \hat{f}(\xi)|^p \; e^{\pi p (x^2+\xi^2)} \;dxd\xi < \infty. \] 
Thus, the assumption (\ref{eq1}) implies that $e_\pi f$ and $e_\pi \hat{f}$ are both in $L^p(\R^d)$ for $1 \leq p < \infty$. Hence, the conditions of Cowling--Price's theorem (Theorem \ref{Cow-Pri}) are satisfied for $f$ and we conclude that $f \equiv 0$.
\end{proof}

\begin{proof}[Proof of Theorem \ref{Th1}]
To simplify the notation, we use $X=(x,\xi)$ and $Z=(z, \zeta) \in \R^{2d}$. We write $UX =(-\xi, x) $ for the rotation and $X^2=x^2+\xi^2$. We consider the family of functions defined in Lemma \ref{lem2} as
\[ F_{(z, \zeta)}(x,\xi)= F_Z (X) =e^{ 2 \pi i x \cdot \xi} \;V_g (M_\zeta T_z f)(x,\xi) \; V_g (M_\zeta T_z f)(-x,-\xi).   \] 
By Lemma \ref{lem2} $(ii)$ we have $ \widehat{F_Z}(\Omega)=F_Z(U \Omega)$. To apply Lemma \ref{lem3}, we need to show that
\begin{equation}\label{eq2}
\int_{\mathbb{R}^{2d}} \int_{\mathbb{R}^{2d}} |F_Z(X) \widehat{F_Z}(\Omega)|^p \; e^{\pi p (X^2+\Omega^2)} \;dX d\Omega < \infty.
\end{equation}
Thus it suffices to show that 
\begin{eqnarray}\label{eq3}
&& \int_{\mathbb{R}^{2d}} \int_{\mathbb{R}^{2d}} |F_Z(X)|^p \;|F_Z (U \Omega)|^p \; e^{\pi p (X^2+\Omega^2)} \;dX d\Omega \nonumber \\ 
&& = \left( \int_{\mathbb{R}^{2d}} |F_Z(X)|^p \;e^{\pi p X^2} dX  \right)^2 :=B(Z)^2 < \infty.
\end{eqnarray}
Since
\[ |V_g (M_\zeta T_z f)(x,\xi)|=|V_g f (x-z, \xi - \zeta)|=|V_g f (X-Z)|  \]
and 
\[ \frac{1}{2}(X-Z)^2+\frac{1}{2}(-X-Z)^2=X^2+Z^2,  \]
the expression for $B(Z)$ can be written as
\[B(Z)=e^{-\pi p Z^2} \int_{\mathbb{R}^{2d}} |V_g f (X-Z)|^p e^{\pi p (X-Z)^2/2} \; |V_g f (-X-Z)|^p e^{\pi p (-X-Z)^2/2} \; dX.  \]
Let $\Psi(X)=|V_g f(X)|^p e^{\pi p X^2/2}$, then assumption (\ref{eq01}) implies that 
\[\int_{\mathbb{R}^{2d}} \Psi(X) \; dX=\int_{\mathbb{R}^{d}} \int_{\mathbb{R}^{d}} |V_g f(x, \xi)|^p e^{\pi p (x^2+\xi^2)/2} \; dxd\xi<\infty , \] 
and so $\Psi \in L^1(\mathbb{R}^{2d})$. Moreover
\begin{eqnarray*}
B(Z) &=& e^{-\pi p Z^2} \int_{\mathbb{R}^{2d}} \Psi(X-Z) \; \Psi(-X-Z) \; dX \\
& = & e^{-\pi p Z^2} \int_{\mathbb{R}^{2d}} \Psi(X) \; \Psi(-2Z-X) \; dX \\
& = & e^{-\pi p Z^2} (\Psi \ast \Psi)(-2Z).
\end{eqnarray*}
Since $\Psi \in L^1(\mathbb{R}^{2d})$, we have $\Psi \ast \Psi  \in L^1(\mathbb{R}^{2d})$ and hence $\Psi \ast \Psi(-2Z)< \infty $. Thus $B(Z) < \infty $ for almost all $Z \in \mathbb{R}^{2d}$. Thus the condition (\ref{eq1}) of Lemma \ref{lem3} is satisfied for $F_Z(X)$ and we conclude that
\begin{equation}\label{eq4} 
|F_Z(X)|=|V_g f(X-Z) \; V_g f(-X-Z)|=0 
\end{equation}
for almost all $Z \in \mathbb{R}^{2d}$. Since $F_Z(X)$ is jointly continuous in $X$ and $Z$, (\ref{eq4}) is true for all $X, Z \in \mathbb{R}^{2d}$. Consequently
\[ |F_Z(0)|=|V_g f(-Z)|^2=0 \qquad \mathrm{for\; all\;} Z \in \mathbb{R}^{2d}. \]
This implies that either $f \equiv 0$ or $g \equiv 0$.
\end{proof}

\begin{proof}[Proof of Theorem \ref{Th2}]
Let $1 \leq p,q <\infty$ with $1/p+1/q=1$. We choose $N>0$, such that 
\begin{equation}\label{eq5}
\iint_{\mathbb{R}^{2d}} \frac{1}{(1+|x|+|\xi|)^{Nq}} \; dx d\xi < \infty.
\end{equation}
Then using H\"older's inequality, we get
\begin{eqnarray}\label{eq6}
&& \iint_{\mathbb{R}^{2d}} \frac{|V_g f (x, \xi)|}{(1+|x|+|\xi|)^{N}} \; e^{\pi |x \cdot \xi|} \; dx d\xi  \nonumber \\
&& \leq  \left( \iint_{\mathbb{R}^{2d}} |V_g f (x, \xi)|^p \; e^{\pi p |x \cdot \xi|} \; dx d\xi \right)^{\frac{1}{p}} \left( \iint_{\mathbb{R}^{2d}} \frac{1}{(1+|x|+|\xi|)^{Nq}} \; dx d\xi \right)^{\frac{1}{q}}. 
\end{eqnarray}
Thus the assumptions (\ref{eq02}) and (\ref{eq5}) imply that 
\[\iint_{\mathbb{R}^{2d}} \frac{|V_g f (x, \xi)|}{(1+|x|+|\xi|)^{N}} \; e^{\pi |x \cdot \xi|} \; dx d\xi < \infty.  \]
Hence, the condition (\ref{Dem-eq}) of Theorem \ref{Demange} is satisfied for $V_g f$ and we conclude that either $f$ or $g$ is identically zero, or both can be written as
\begin{equation}\label{eq7}
f(x)= P(x) e^{-a x^2 - 2 \pi i  w \cdot x} \quad \mathrm{and}  \quad g(x)= Q(x) e^{-a x^2 - 2 \pi i  w \cdot x},
\end{equation}
with $P$ and $Q$ polynomials whose degrees satisfy $\deg(P ) + \deg(Q)<N-d, \; w \in \C^d$ and $a>0$. Indeed, we show that if $f$ and $g$ are as in (\ref{eq7}) and $V_g f$ satisfies (\ref{eq02}), then $f \equiv 0$ or $g \equiv 0$.

Let $f$ and $g$ are of the form given in (\ref{eq7}), then
\begin{equation*}
V_g f(x, \xi)=R(x,\xi) e^{- \pi i x \cdot \xi} e^{-(\pi/2)a^{-1} (\xi+2 i w_2)^2} e^{-(\pi/2)a x^2} e^{-2 \pi i x \cdot w_1},
\end{equation*}
where $R$ is a polynomial of degree $\deg(P) + \deg(Q)$ and $w=w_1+ i w_2$. Therefore, 
\[|V_g f(x, \xi)|=|R(x,\xi)| e^{-(\pi/2)a^{-1} (\xi^2-4 w^2_2)} e^{-(\pi/2)a x^2}. \]
Since $V_g f$ satisfies (\ref{eq02}), we have 
\begin{eqnarray*}
&& \iint_{\mathbb{R}^{2d}} |V_g f (x, \xi)|^p \; e^{\pi p |x \cdot \xi|} \; dx d\xi \\
&& = e^{2 \pi a^{-1} p w^2_2 } \iint_{\mathbb{R}^{2d}} |R(x,\xi)|^p e^{-(\pi/2)p(a x^2+ a^{-1} \xi^2 - 2 |x| |\xi|)} \; dx d\xi < \infty.
\end{eqnarray*}
It remains to show that this is only possible for $R \equiv 0$. We are linked to prove that  
\[ \int_0^\infty \int_0^\infty  |R(u,v)|^p \; e^{-|u-v|^2} dudv=\infty, \] 
for any non-zero polynomial $R$. But non-vanishing polynomials are bounded below, say for $|u|>A, \; |v|>A$, then
\[\int_A^\infty \int_A^\infty  e^{-|u-v|^2} dudv=\infty.  \] 
This completes the proof.
\end{proof}
\begin{remark}
Theorem \ref{Th2} implies Theorem \ref{Th1}. To see this,
assume that the condition (\ref{eq01}) of Theorem \ref{Th1} is satisfied. Then
\begin{eqnarray*}
\iint_{\mathbb{R}^{2d}} |V_g f(x,\xi)|^p \; e^{\pi p |x \cdot \xi|} \;dxd\xi 
\leq \iint_{\mathbb{R}^{2d}} |V_g f(x,\xi)|^p \; e^{\pi p (x^2+\xi^2)/2} \;dxd\xi < \infty.
\end{eqnarray*}
By Theorem \ref{Th2}, $f \equiv 0$ or $g \equiv 0$. 
\end{remark}

\begin{proof}[Proof of Theorem \ref{Th3}]
Let $1 \leq p,q <\infty$ with $1/p+1/q=1$. We choose $N>0$, such that 
\begin{equation}\label{eq8}
\iint_{\mathbb{R}^{2d}} \frac{1}{(1+|x|+|\xi|)^{Nq}} \; dx d\xi < \infty.
\end{equation}
Then using H\"older's inequality, we get
\begin{eqnarray}\label{eq9}
&& \iint_{\mathbb{R}^{2d}} \frac{|f(x) \hat{f}(\xi)|}{(1+|x|+|\xi|)^{N}} \; e^{2 \pi |x \cdot \xi|} \; dx d\xi  \nonumber \\
&& \leq  \left( \iint_{\mathbb{R}^{2d}} |f(x) \hat{f}(\xi)|^p \; e^{2 \pi p |x \cdot \xi|} \; dx d\xi \right)^{\frac{1}{p}} \left( \iint_{\mathbb{R}^{2d}} \frac{1}{(1+|x|+|\xi|)^{Nq}} \; dx d\xi \right)^{\frac{1}{q}}. 
\end{eqnarray}
Thus the assumptions (\ref{eq03}) and (\ref{eq8}) imply that 
\[\iint_{\mathbb{R}^{2d}} \frac{|f(x) \hat{f}(\xi)|}{(1+|x|+|\xi|)^{N}} \; e^{2 \pi |x \cdot \xi|} \; dx d\xi < \infty.  \]
Hence, the condition (\ref{Bon-eq}) of Theorem \ref{Bonami} is satisfied for $f$ and we conclude that $f=0$ almost everywhere whenever $N \leq d$ and if $N >d$, then $f(x)= P(x)e^{-a \pi x^2}$ where $P$ is a polynomial of degree $< \frac{N-d}{2}$ and $a>0$. Indeed, we show that if $f$ is of this form, then $f \equiv 0$.

Let $f(x)= P(x)e^{-a \pi x^2}$, then $\hat{f}(\xi)=Q(\xi)e^{-\pi a^{-1} \xi^2}$, for some polynomial $Q$. Since $f$ satisfies (\ref{eq03}), we have 
\begin{eqnarray*}
\iint_{\mathbb{R}^{2d}} |f(x) \hat{f}(\xi)|^p \; e^{2 \pi p |x \cdot \xi|} \; dx d\xi
= \iint_{\mathbb{R}^{2d}} |P(x)|^p |Q(\xi)|^p \; e^{- \pi p(ax^2+a^{-1}\xi^2 -2 |x| |\xi|)} \; dx d\xi < \infty.
\end{eqnarray*}
It remains to show that this is only possible for $P \equiv 0$. We are linked to prove that
\[ \int_0^\infty \int_0^\infty  |P(u)|^p\; |Q(v)|^p \; e^{-|u-v|^2} dudv=\infty, \] 
for any non-zero polynomials $P$ and $Q$. But non-vanishing polynomials are bounded below, say for $|u|>A, \; |v|>A$, then
\[\int_A^\infty \int_A^\infty  e^{-|u-v|^2} dudv=\infty.  \] 
This completes the proof.
\end{proof}

\begin{remark}
\begin{enumerate}
\item[$(i)$] If we consider $p=1$ in Theorems \ref{Th1} and \ref{Th2}, then we obtain the Theorem \ref{Gro1} and Conjecture \ref{Con}, respectively. 
\item[$(ii)$] If we consider $p=1$ in Theorem \ref{Th3}, then we obtain the Theorem \ref{Beurling}.
\end{enumerate}
\end{remark}

Next, we discuss an analogue of Donoho--Stark uncertainty
principle and provide some estimates for the size of the essential support of $V_g f$. We start with the following lemma.
\begin{lemma}\label{lem4}
Let $1 \leq p < \infty$ and $f,g \in L^2(\R^d)$. If $U \subseteq \R^{2d}$ and $\epsilon \geq 0$ are such that
\[\iint_U |V_g f(x,\xi)|^p \;dx d\xi \geq (1-\epsilon) \|f \|^p_2 \|g \|^p_2, \]
then $|U| \geq 1-\epsilon$.
\end{lemma}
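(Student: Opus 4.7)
The plan is to combine the elementary pointwise bound $\|V_g f\|_\infty \leq \|f\|_2\|g\|_2$ with the hypothesis, and read off the volume estimate directly.

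First I would note that by Cauchy--Schwarz applied to the definition $V_g f(x,\xi)=\langle f, M_\xi T_x g\rangle$, together with the fact that translation and modulation are isometries on $L^2(\R^d)$, one has the uniform estimate
\[
|V_g f(x,\xi)| \;\leq\; \|f\|_2\,\|M_\xi T_x g\|_2 \;=\; \|f\|_2\,\|g\|_2
\]
for every $(x,\xi)\in \R^{2d}$. This is the only nontrivial input needed, and it holds for arbitrary $f,g\in L^2(\R^d)$.

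Next I would raise this to the $p$-th power and integrate over $U$ to obtain
\[
\iint_U |V_g f(x,\xi)|^p\,dx\,d\xi \;\leq\; \|f\|_2^p\,\|g\|_2^p\,|U|.
\]
Combining with the hypothesis
\[
(1-\epsilon)\,\|f\|_2^p\,\|g\|_2^p \;\leq\; \iint_U |V_g f(x,\xi)|^p\,dx\,d\xi
\]
and dividing by $\|f\|_2^p\|g\|_2^p$ (which is legitimate, for otherwise $V_g f\equiv 0$ a.e.\ and the hypothesis forces $\epsilon \geq 1$, making the conclusion trivial) immediately yields $|U|\geq 1-\epsilon$.

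There is no real obstacle here: the argument is a one-line application of the $L^\infty$ bound on the STFT, analogous to the classical Donoho--Stark argument for the Fourier pair $(f,\hat f)$. The only mild care required is the trivial case where one of $f,g$ vanishes, which I would dispose of separately at the beginning.
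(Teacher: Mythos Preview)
Your proof is correct and follows essentially the same approach as the paper: apply Cauchy--Schwarz to obtain the pointwise bound $|V_g f(x,\xi)|\leq \|f\|_2\|g\|_2$, then integrate over $U$ and compare with the hypothesis. Your handling of the degenerate case $\|f\|_2\|g\|_2=0$ is an extra bit of care that the paper omits.
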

\begin{proof}
The Cauchy--Schwartz inequality implies that
\[ |V_g f(x,\xi)|=|\langle f, M_\xi T_x g \rangle| \leq \|f \|_2 \|g \|_2 \qquad \text{for all\;} (x,\xi) \in \R^{2d}. \]
Therefore,
\[ (1-\epsilon) \|f \|^p_2 \|g \|^p_2 \leq \iint_U |V_g f(x,\xi)|^p dx d\xi \leq \|V_g f \|^p_\infty |U| \leq |U| \|f \|^p_2 \|g \|^p_2,   \]
and so $|U| \geq 1-\epsilon$.
\end{proof}
Estimates obtained in Theorems \ref{Th4} and \ref{Th5} improve Lemma \ref{lem4} and provide a stronger estimate on the size of the essential support of $V_g f$. To prove Theorems \ref{Th4} and \ref{Th5} we use Lieb's \cite{Lie90} uncertainty principle. 

\begin{theorem}\label{Lie-ineq}
$($Lieb \cite{Lie90}$)$ Assume that $f,g \in L^2(\R^d)$. Then
\begin{equation}\label{eq10}
\iint_{\R^{2d}} |V_g f(x,\xi)|^p dx d\xi  \quad \left\{
\begin{array}{ll}
\leq \left(\frac{2}{p} \right)^d (\|f \|_2 \; \|g \|_2)^p  & \quad \mbox{if } 2 \leq p < \infty, \\
\geq \left(\frac{2}{p} \right)^d (\|f \|_2 \; \|g \|_2)^p & \quad \mbox{if } 1 \leq p \leq 2 .
\end{array}
\right.
\end{equation}
\end{theorem}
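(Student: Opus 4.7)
The plan is to reduce Lieb's inequality to sharp forms of the Hausdorff--Young inequality (Babenko--Beckner) and the Young convolution inequality (Brascamp--Lieb). The starting observation is the factorization $V_g f(x,\xi) = \widehat{(f \cdot T_x\bar g)}(\xi)$, which displays the STFT, for fixed $x$, as a one-dimensional Fourier transform in $t$. This invites integrating first in $\xi$ via Hausdorff--Young and then treating the remaining $x$-integral as a convolution of $|f|^{p'}$ with a reflection of $|g|^{p'}$.

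For the upper bound ($p \geq 2$), I would apply the sharp Hausdorff--Young inequality in $\xi$ to obtain $\|V_g f(x,\cdot)\|_{L^p_\xi}^p \leq B_p^{dp}\,\|f \cdot T_x\bar g\|_{L^{p'}_t}^p$, where $B_p$ is the Babenko--Beckner constant and $p'$ is the conjugate exponent. After Fubini, the right-hand side becomes $B_p^{dp}\int (F \ast \check G)(x)^{p/p'}\,dx$ with $F = |f|^{p'}$, $G = |g|^{p'}$, and $\check G(y) = G(-y)$. Since $p \geq 2$ forces $p/p' \geq 1$, and the exponents $(2/p',\,2/p',\,p/p')$ satisfy the Young condition $\tfrac{p'}{2} + \tfrac{p'}{2} = 1 + \tfrac{p'}{p}$, I would invoke the sharp Young convolution inequality (Brascamp--Lieb) to bound this by a constant times $\|F\|_{2/p'}^{p/p'}\|G\|_{2/p'}^{p/p'} = \|f\|_2^p\|g\|_2^p$. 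The final step is an arithmetic verification that the product of the two sharp constants collapses to exactly $(2/p)^d$; a sanity check with $f = g = 2^{d/4}e^{-\pi t^2}$, for which $|V_g f(x,\xi)| = e^{-\pi(x^2+\xi^2)/2}$ and $\iint|V_g f|^p = (2/p)^d$, confirms both the formula and the sharpness.

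For the reverse inequality ($1 \leq p \leq 2$), the naive repetition fails because Hausdorff--Young runs in the wrong direction on $L^p$ for $p<2$. My strategy would be to combine the $p=2$ isometry $\|V_g f\|_2 = \|f\|_2\|g\|_2$ (equation (2.2)) with either the \emph{reverse} Young convolution inequality of Brascamp--Lieb, which applies precisely when the relevant exponents fall below one (as happens here, where $p/p' \leq 1$), or alternatively a rearrangement/Gaussian-reduction argument in the spirit of Lieb's original paper, which reduces the problem to the Gaussian extremal case already identified in the previous step. Both approaches go through because the Gaussian extremizers of the convolution and Fourier inequalities coincide with those of Lieb's inequality itself.

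The main obstacle, in either direction, is the careful bookkeeping of sharp constants: only the exact Babenko--Beckner and Brascamp--Lieb constants multiply together to produce precisely $(2/p)^d$, and a non-sharp use of either would destroy this factor. The $1 \leq p \leq 2$ direction is additionally delicate because one must choose between the reverse-Young and rearrangement routes, each of which requires its own careful setup; a straightforward interpolation between $p=2$ (isometry) and the $p' \geq 2$ upper bound turns out to be too lossy to recover the sharp constant $(2/p)^d$, so a genuine use of the extremizer structure is unavoidable.
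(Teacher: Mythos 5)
The paper does not actually prove Theorem \ref{Lie-ineq}: it is imported verbatim from Lieb \cite{Lie90} and used as a black box in the proofs of Theorems \ref{Th4} and \ref{Th5}, so there is no internal proof to compare against. Your outline is essentially a reconstruction of Lieb's original argument, and its main lines are sound: the factorization $V_g f(x,\cdot)=\widehat{(f\cdot T_x\bar g)}$, sharp Hausdorff--Young in $\xi$ with the Babenko--Beckner constant, the identification of the remaining $x$-integral with $\int (F\ast\check G)^{p/p'}$ for $F=|f|^{p'}$, $G=|g|^{p'}$, and sharp Young for $p\ge 2$; the exponent bookkeeping $\frac{p'}{2}+\frac{p'}{2}=1+\frac{p'}{p}$ and $\|F\|_{2/p'}^{p/p'}=\|f\|_2^{p}$ is correct, and your Gaussian computation does confirm that the product of sharp constants collapses to $(2/p)^d$. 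The one soft spot is the lower bound: pairing the $L^2$ isometry with reverse Young is not the right combination. What actually works (and is what Lieb does) is to run the same two steps with both inequalities reversed: first the dual form of sharp Hausdorff--Young, $\|\hat h\|_{p}\ge B^{-d}\|h\|_{p'}$ for $1\le p\le 2$, obtained by applying Hausdorff--Young to $\hat h$ and invoking Fourier inversion, and then the Brascamp--Lieb reverse Young inequality, which is applicable precisely because all three exponents $2/p'$, $2/p'$, $p/p'$ drop to at most $1$ when $p\le 2$. Since you correctly identify reverse Young as the key tool and correctly locate the exponent condition, this is a gap in exposition rather than in substance; your closing remark that plain interpolation between $p=2$ and $p\ge 2$ cannot recover the sharp constant is also correct.
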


\begin{proof}[Proof of Theorem \ref{Th4}]
We first apply H\"older's inequality with exponents $q = p$ and
$q'= \frac{p}{p-1} $, and then in the second step we use Lieb's inequality for $p \geq 2$, we obtain
\begin{eqnarray*}
 (1-\epsilon) &&  \|f \|_2 \|g \|_2 \leq  \iint_U |V_g f(x,\xi)| \;dx d\xi \\
&& \leq \left(\iint_{\R^{2d}} |V_g f(x,\xi)|^p \;dx d\xi \right)^{\frac{1}{p}} \left(\iint_{\R^{2d}} \chi_U (x,\xi)^{q'} \;dx d\xi \right)^{\frac{p-1}{p}} \\
&& \leq \left(\frac{2}{p} \right)^{\frac{d}{p}} \|f \|_2 \; \|g \|_2 \; |U|^{\frac{p-1}{p}}.
\end{eqnarray*}
Thus  
\[ |U| \geq (1-\epsilon)^{\frac{p}{p-1}} \left( \frac{p}{2}  \right)^{\frac{d}{p-1}} \qquad \text{for all } \; p \geq 2. \]
\end{proof}

\begin{proof}[Proof of Theorem \ref{Th5}]
Using Lieb's inequality for $p = 1$ and (\ref{eq002}) we obtain that
\[ (1-\epsilon) \|V_g f \|^p_1 \geq (1-\epsilon) 2^{pd}\; \|f \|^p_2 \; \|g \|^p_2 =(1-\epsilon) 2^{pd}\; \|V_g f \|^p_2. \]
On the other hand, using H\"older's inequality with exponents $q = \frac{2}{p}$ and $q'= \frac{2}{2-p} $, for $1 \leq p < 2$ we get
\begin{eqnarray*}
\iint_U |V_g f(x,\xi)|^p dx d\xi 
& \leq & \left(\iint_{\R^{2d}} |V_g f(x,\xi)|^2 \;dx d\xi \right)^{\frac{p}{2}} \left(\iint_{\R^{2d}} \chi_U (x,\xi)^{q'} \;dx d\xi \right)^{\frac{2-p}{2}} \\
& = & \|V_g f \|^p_2 \; |U|^{\frac{2-p}{2}}.
\end{eqnarray*}
Combining these inequalities with (\ref{eq003}), we obtain
\[ (1-\epsilon) 2^{pd}\; \|V_g f \|^p_2 \leq |U|^{\frac{2-p}{2}} \; \|V_g f \|^p_2 . \]
Thus
\[ |U| \geq 2^{\frac{2pd}{2-p}}  (1-\epsilon)^{\frac{2}{2-p}} \qquad \text{for all  } \; 1 \leq p < 2. \]
\end{proof}

\begin{remark}
If we consider $p = 1$ in Theorem \ref{Th5}, then we get the estimate obtained by Gr\"ochenig (\cite{Gro03}, Prop. 2.5.2.). Thus Theorem \ref{Th5} generalizes the estimate of Gr\"ochenig.  Also, if we compare Theorems \ref{Th4} and \ref{Th5}, then we see that Theorem \ref{Th5} gives a slightly sharper estimate.
\end{remark}

\section*{Acknowledgments}
The author is deeply indebted to Prof. S. Thangavelu for several fruitful discussions and generous comments. The author also wishes to thank Prof. Aline Bonami for several valuable comments and suggestions concerning Theorems \ref{Th2} and \ref{Th3}. Further, the author is grateful to the University Grants Commission, India for providing the Dr. D. S. Kothari Post Doctoral Fellowship (Award No.- F.4-2/2006 (BSR)/MA/18-19/0032).

\end{document}